\numberwithin{equation}{section}
\theoremstyle{plain}
\newtheorem{theorem}{Theorem}[section]
\newtheorem{proposition}[theorem]{Proposition}
\newtheorem{lemma}[theorem]{Lemma}
\newtheorem{definition}[theorem]{Definition}
\DeclarePairedDelimiter\abs{\lvert}{\rvert}
\begin{document}

\begin{frontmatter}
\title{A phase transition in excursions from infinity of the `fast' fragmentation-coalescence process}
\runtitle{The fast fragmentation-coalescence process}

\begin{aug}
\author{\fnms{Andreas E.} \snm{Kyprianou}\thanksref{T1,m1}\ead[label=e1]{a.kyprianou@bath.ac.uk}},
\author{\fnms{Steven W.} \snm{Pagett}\thanksref{m1}\ead[label=e2]{s.pagett@bath.ac.uk}},
\author{\fnms{Tim} \snm{Rogers}\thanksref{T2,m1}\ead[label=e3]{t.c.rogers@bath.ac.uk}},
\and
\author{\fnms{Jason} \snm{Schweinsberg}\thanksref{T3,m2}
\ead[label=e4]{jschwein@math.ucsd.edu}}

\runauthor{Kyprianou, Pagett, Rogers and Schweinsberg}

\thankstext{T1}{Supported by the EPSRC grant EP/L002442/1.}
\thankstext{T2}{Supported by the Royal Society.}
\thankstext{T3}{Supported in part by the NSF grant DMS-1206195.}

\affiliation{University of Bath\thanksmark{m1} and University of Califronia, San Diego\thanksmark{m2}}

\address{Department of Mathematical Sciences\\
University of Bath\\
Claverton Down\\
Bath, BA2 7AY\\
\printead{e1}\\
\phantom{E-mail:\ }\printead*{e2}\\
\phantom{E-mail:\ }\printead*{e3}}

\address{Department of Mathematics 0112\\
University of California, San Diego\\
9500 Gilman Drive\\
La Jolla, CA92093-0112\\
\printead{e4}}
\end{aug}

\begin{abstract}
An important property of Kingman's coalescent is that, starting from a state with an infinite number of blocks, over any positive time horizon,  it  transitions into an almost surely finite number of blocks. This is known as `coming down from infinity'. Moreover, of the many different (exchangeable) stochastic coalescent models, Kingman's coalescent is the `fastest'  to come down from infinity. In this article we study what happens when we counteract this `fastest' coalescent with the action of an extreme form of fragmentation. We augment Kingman's coalescent, where any two blocks merge at rate $c>0$, with a fragmentation mechanism where each block fragments at constant rate, $\lambda>0$, into it's constituent elements. We prove that there exists a phase transition at $\lambda=c/2$, between regimes where the resulting `fast' fragmentation-coalescence process is able to come down from infinity or not. In the case that $\lambda<c/2$ we develop an excursion theory for the fast fragmentation-coalescence process out of which a number of interesting quantities can be computed explicitly. 
\end{abstract}

\begin{keyword}[class=MSC]
\kwd[Primary ]{60J25}
\kwd{60G09}
\end{keyword}

\begin{keyword}
\kwd{fragmentation}
\kwd{coalescence}
\kwd{excursion theory}
\end{keyword}

\end{frontmatter}

\section{Introduction}

This paper considers the occurrence of a phenomenon which appears when we interplay the opposing effects of fragmentation and coalescence. Our setting is that of an adaptation of Kingman's coalescent, in which an aggressive form of fragmentation (splitting blocks up into their constituent elements) is introduced. 

Recall that  Kingman's coalescent describes a system of disjoint subsets covering $\mathbb{N}$, later referred to as blocks, such that any individual pair of blocks merge at a constant rate, say $c>0$, until the system has been reduced to a single block. 
Suppose we write $\mathbf{P}_n$ for the associated law of $K:=(K(t): t\geq 0)$, the number of blocks in the process,  when issued from $n\in\mathbb{N}$.
Kingman \cite{Kingman1982} addresses the question as to whether an entrance law of the chain $K$ exists at $\{+\infty\}$. 
Roughly speaking, he proved that, independently of the value of $c$, $\mathbf{P}_\infty: =\lim_{n\uparrow\infty}\mathbf{P}_n$ is well defined in the appropriate sense. In particular, Kingman showed that when the coalescent is issued with an infinite number of blocks, it `comes down from infinity' meaning that $\mathbf{P}_\infty(K(t)<\infty)=1$ for all $t>0$. Moreover, it has been shown \cite{Aldous1999} that, $\mathbf{P}_\infty$-almost surely
\[
\lim_{t\downarrow0}tK(t) = \frac{2}{c}.\label{speedk}
\]

This is what we refer to by the `speed' of coming down from infinity. Kingman's coalescent can be considered as the most basic model within a certain class of so-called exchangeable coalescent processes \cite{Schweinsberg2000}
in which multiple mergers of blocks are permitted (irrespective of any notion of their size). Berestycki et. al \cite{BerestyckiJNLimic2010} show that Kingman's coalescent is the `fastest' coalescent to come down from infinity, in that $2/ct$, for suitably small $t$, is a lower bound for the number of blocks at time $t$ for all such processes.

Berestycki \cite{BerestyckiJ2004} has also studied a very general  class of (exchangeable) fragmentation-coalescent models, showing, amongst other things, that a stationary distribution always exists. In addition, he found a subclass of such processes combining Kingman's coalescent with certain forms of fragmentation that still come down from infinity. In light of this, we hypothesise that, in order for additionally interesting phenomena to emerge, we must choose a more extreme form of fragmentation to `compete' against Kingman's coalescent.  

The `fast' fragmentation-coalescence process that we will work with follows the dynamics of Kingman's coalescent, but with the modification that each block in the system, at constant rate $\lambda>0$, is shattered into its constituent elements.
A more precise definition and construction
of the process  as an exchangeable fragmentation-coalescent process  will be given in Section 2, but for now, we are interested in a Markov chain $N := (N(t): t\geq 0)$  on $\mathbb{N}\cup\{\infty\}$, which represents the number of blocks in the fast fragmentation-coalescence process. Its transitions are specified by the $Q$-matrix having entries given by the aforesaid Kingman and fragmentation dynamic,  so that 
\[
Q_{i,j}= \left\{
\begin{array}{ll}
 c\binom{i}{2} &\text{if } j = i-1,\\
 \lambda i & \text{if } j = \infty.
\end{array}
\right.
\]
We are interested in understanding whether the state $\{\infty\}$ is absorbing or recurrent for $N$. That is to say, we want to know whether it is possible to construct a recurrent extension of the process $N$ beyond its first hitting time of $\{\infty\}$, when issued from a point in $\mathbb{N}$. The idea that $N$ `comes down from infinity' is then clearly captured in the notion that the process instantaneously visits $\mathbb{N}$ after entering the state $\{\infty\}$.  In Proposition 15 of  \cite{BerestyckiJ2004}, Berestycki gives sufficient conditions for an EFC process to come down from infinity. In doing so, he makes assumptions which specifically exclude our setting. Specifically, his assumptions (L) and (H) are violated by our model as they require that blocks split into a finite number of of sub-blocks at a fragmentation event.  

It turns out to be more convenient, however, to study the reciprocal process $M:=1/N$. 
The case that there is a recurrent extension of $M$ from 0 corresponds to the ability of the fast fragmentation-coalescence process to come down from infinity. Moreover, if $M=0$ is an absorbing state then the fast fragmentation-coalescence process stays infinite. It transpires that $\theta:=2\lambda/c$ is the quantity that governs this behaviour. Our  main result in this respect is as follows.

\begin{theorem}[Phase transition]\label{comedown}$\mbox{ }$
\begin{enumerate}
\item[(i)]  If $0<\theta<1$, then $M: = (M(t): t\geq 0)$ is a recurrent  Feller process on $ \{1/n: n\in\mathbb{N}\}\cup\{0\}$ such that $0$ is instantaneously regular (that is to say $0$ is a not a holding point) and not sticky (that is to say $\int_{0}^\infty\mathbf{1}_{\{M(s)=0\}}{\rm d}s = 0$ almost surely). 
  
\item[(ii)]  If $\theta\geq1$, then $0$ is an absorbing state for $M$.
\end{enumerate}
\end{theorem}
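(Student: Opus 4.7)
The plan is to analyse the dichotomy by computing, via one-step analysis on the minimal chain (i.e., $N$ killed at $\infty$), two explicit quantities that together pinpoint the critical value $\theta=1$. First, a one-step argument on the embedded discrete chain (at state $k$, a coalescence occurs before a fragmentation with probability $(k-1)/(k-1+\theta)$) gives the hitting probability
\[
p_{n,m}:=\mathbf{P}_n(\tau_m<\tau_\infty)=\prod_{k=m+1}^{n}\frac{k-1}{k-1+\theta}=\frac{\Gamma(n)\Gamma(m+\theta)}{\Gamma(n+\theta)\Gamma(m)}\sim C(m)\,n^{-\theta},\qquad 1\le m<n.
\]
Second, postulating a stationary distribution $\pi$ for a Feller extension of $M$ supported on $\{1/n:n\in\mathbb{N}\}$ (with $\pi(\{0\})=0$, as required by non-stickiness), the Kolmogorov balance equation at state $1/m$ forces $\pi(1/(m+1))/\pi(1/m)=(m-1+\theta)/(m+1)$, whence
\[
\pi(1/m)\propto\frac{\Gamma(m-1+\theta)}{m!}\sim m^{\theta-2}\qquad(m\to\infty),
\]
which is summable to a probability measure if and only if $\theta<1$. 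The phase transition therefore matches the integrability of $\pi$.

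For part (i), $\theta<1$, I would construct $M$ via It\^{o}'s excursion theory at $0$. The excursion measure $\mathbf{n}$ is obtained as the vague limit of the rescaled laws $n^{\theta}\mathbf{P}_n$ on excursion path space as $n\to\infty$, the factor $n^{\theta}$ matched to the decay of $p_{n,m}$. I then verify: (a) $\sigma$-finiteness of $\mathbf{n}$; (b) the entrance law $\mu_t(\cdot):=\mathbf{n}(M_t\in\cdot)$ is a probability measure on $\{1/n\}$ for every $t>0$, consistent with $\pi$; (c) $\mathbf{n}(\zeta>0)=\infty$, giving instantaneous regularity at $0$; and (d) $\int(1\wedge t)\,\mathbf{n}(\zeta\in dt)<\infty$, giving non-stickiness. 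It\^{o}'s synthesis then produces the required recurrent Feller process with stationary law $\pi$, and the remaining claims follow.

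For part (ii), $\theta\geq 1$, the non-summability of the candidate $\pi$ obstructs any non-absorbing Feller extension. Either the scaling $n^{\theta}\mathbf{P}_n$ fails to converge to a non-trivial $\sigma$-finite excursion measure (intuitively, trajectories from very large $n$ fragment too quickly relative to the Kingman descent for the per-excursion occupation on $\{1/m\}$ to remain integrable), or equivalently a Hille--Yosida/resolvent argument shows that the only closed extension of the $Q$-matrix as a generator on $C_0(\{0\}\cup\{1/n\})$ is the minimal one. Either way, $0$ is absorbing for $M$.

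The main obstacle is rigorously establishing (a)--(d) in part (i): identifying the correct normalisation $n^{\theta}$ and proving vague convergence to a $\sigma$-finite excursion measure, together with the delicate integrability properties at $t\downarrow 0$ and $t\to\infty$. The candidate invariant measure $\pi$ gives a clean heuristic for where the threshold lies, but converting it into a rigorous construction (for $\theta<1$) and a matching non-existence statement (for $\theta\geq 1$) requires careful control of the joint evolution of Kingman's descent from infinity and the Poisson fragmentation marks along each excursion.
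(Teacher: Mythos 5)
Your heuristic correctly locates the threshold: the hitting probabilities $p_{n,m}$ and the formal invariant measure $\pi(1/m)\propto\Gamma(m-1+\theta)/m!$ are exactly the quantities the paper exploits, and the summability criterion $\theta<1$ is the right signpost. But the proposal stops short of the decisive work in both directions. For part (i), the crux is a quantitative proof that the process comes down from infinity: the paper bounds the expected time for $N$ to reach a state with $k$ blocks, uniformly over all initial states including infinite ones, by $e_k^{(n)}\le p_{n,k}^{-1}\,\mathbb{E}_n[\tau_\infty]$ --- a geometric number-of-attempts argument in which each failed attempt is a sojourn ending in a fragmentation --- and shows this bound is finite precisely when $\theta<1$ and tends to $0$ as $k\to\infty$. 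Nothing in your plan substitutes for this estimate: summability of the candidate invariant measure is necessary but does not by itself yield regularity of $0$, $\mathbf{n}(\zeta>0)=\infty$, or non-stickiness; these are exactly your items (a)--(d), which you acknowledge you cannot yet establish. Moreover, building $M$ by It\^{o} synthesis from a hypothetical vague limit of $n^{\theta}\mathbf{P}_n$ leaves open whether the synthesised process coincides with the block-counting functional of the given partition-valued EFFC process; the paper must prove the Markov and Feller properties of $N$ separately (via the truncated chains $\hat N_k$ obtained by excising the time spent above level $k$), precisely because the number-of-blocks map is not continuous on partition space.

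For part (ii) the gap is of the same kind: ``non-summability obstructs any non-absorbing extension'' is the conclusion you want, not an argument. The paper makes it concrete with an occupation-time identity: if excursions from $\infty$ existed, the expected time $s_k(t)$ spent in state $k$ during excursions begun before time $t$ would satisfy $s_k(t)/s_1(t)=\lambda u_k/p_{k,1}$, where $u_k$ is the mean holding time at $k$, and then $\sum_{k}s_k(t)/s_1(t)=\theta\sum_k \Gamma(k+\theta)/\bigl(\Gamma(k)\Gamma(1+\theta)k(k-1+\theta)\bigr)=\infty$ for $\theta\ge1$, contradicting the almost sure finiteness of the total elapsed time and forcing $s_k(t)=0$ for all $k$. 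You would need to supply an argument of comparable force; the resolvent/Hille--Yosida alternative you mention is not developed. As it stands the proposal is a plausible road map with the two decisive estimates missing.
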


\begin{figure}[ht]\label{fig}
\begin{center}
\includegraphics[width=0.8\textwidth, trim=40 40 40 0]{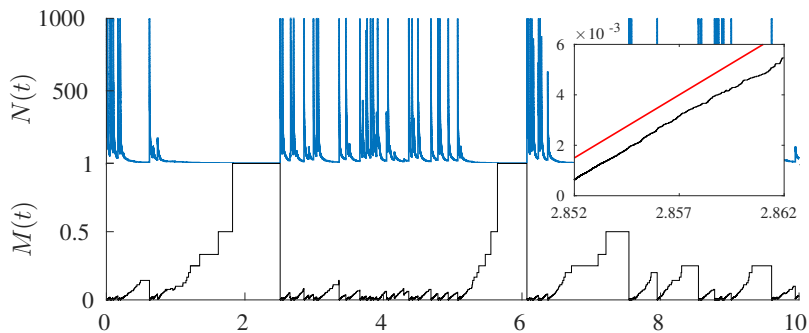}
\caption{A computer simulation of the trajectories of N (blue) and M
(black) restricted to $n = 10^6$ integers, with $c = 1$ and $\lambda= 0.2$. The
inset shows detail of typical behaviour near M = 0; the red line here
illustrates the `speed' predicted by Theorem 
 \ref{CDI}.}
\end{center}
\end{figure}

Theorem \ref{comedown} $(i)$ also alludes to the existence of an excursion theory for the process $M$ away from $0$ (equivalently  $N$ from $\{+\infty\}$). Indeed combined with the recurrence properties of the point $0$ given in Theorem \ref{comedown} (i) standard theory dictates that a local time exists for $M$ at $0$ such that its inverse is a pure jump subordinator with infinite activity. Moreover, accompanying this local time is an excursion measure, $\mathbb{Q}$.  Following classical excursion theory in Chapter XIX.46 of Dellacherie and Meyer \cite{DellacherieMeyer1978}, we can write down an invariant measure for the transition semi-group of $M$ in terms of this excursion measure. On account of the fact that $M$ is a bounded recurrent process in $[0,1]$, we would expect this invariant measure to be a stationary distribution, that is to say, we would expect $M$ to possess an ergodic limit. With some additional work, the next theorem shows that the computations can be pushed even further to obtain an explicit stationary distribution. 

\begin{theorem}[Stationary distribution]\label{localtime}
If $0<\theta<1$, then $M$ has stationary distribution given by 
\[
\rho_M(1/k)=\frac{(1-\theta)}{\Gamma(\theta)}\frac{\Gamma(k-1+\theta)}{\Gamma(k+1)},\quad\text{$k\in\mathbb{N}$},
\]
which is a Beta-Geometric$(1-\theta,\theta)$ distribution and accordingly its probability generating function can be written in the form 
\[
G(s) =  1-(1-s)^{1-\theta} \qquad s\in (0,1).
\]
\end{theorem}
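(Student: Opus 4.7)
The plan is to convert the stationarity of $\rho_M$ into a first-order linear ODE for the probability generating function $G(s) = \sum_{k \geq 1} \rho_M(1/k)\, s^k$, by feeding a suitably chosen family of test functions into the invariance identity $\int \mathcal{L}f\,{\rm d}\rho_M = 0$, where $\mathcal{L}$ denotes the infinitesimal generator of $M$.

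First I would use Theorem~\ref{comedown}(i), the compactness of $\{1/n : n \in \mathbb{N}\} \cup \{0\}$, and the excursion-theoretic formula $\mu(A) \propto \mathbb{Q}[\int_0^\zeta \mathbf{1}_A(M(s))\,{\rm d}s]$ recalled in the paragraph preceding the statement, to obtain existence and uniqueness (up to normalisation) of a stationary probability measure $\rho_M$. Non-stickiness of $0$ forces $\rho_M(\{0\}) = 0$ via the ergodic theorem applied to $\mathbf{1}_{\{M(s)=0\}}$, so $\rho_M$ is concentrated on $\{1/n : n \geq 1\}$, giving the boundary conditions $G(0) = 0$ and $G(1) = 1$ that will close the problem.

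Next, for $s \in (0,1)$ I would test against $f_s(1/n) := s^n$, $f_s(0) := 0$. On the jump part of the state space the generator acts as
\begin{equation*}
\mathcal{L}f_s(1/n) = c\binom{n}{2}\bigl(s^{n-1} - s^n\bigr) - \lambda n s^n,
\end{equation*}
and since $f_s$ and $\mathcal{L}f_s$ both decay geometrically in $n$, the invariance identity $\int \mathcal{L}f_s\,{\rm d}\rho_M = 0$ is legitimate --- if needed, by approximating $f_s$ through truncations supported on finitely many states, for which invariance is immediate. Summing against $\rho_M$ and rewriting the two resulting sums via $G'(s) = \sum_k k\,\rho_M(1/k)\, s^{k-1}$ and $G''(s) = \sum_k k(k-1)\,\rho_M(1/k)\, s^{k-2}$ collapses the identity to
\begin{equation*}
\tfrac{c}{2}(1-s)\, G''(s) = \lambda\, G'(s), \qquad s \in (0,1).
\end{equation*}
With $\theta = 2\lambda/c$ this reads $(\log G')'(s) = \theta/(1-s)$, integrating to $G'(s) = G'(0)(1-s)^{-\theta}$ and then to $G(s) = G'(0)[1 - (1-s)^{1-\theta}]/(1-\theta)$. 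The boundary conditions $G(0) = 0$, $G(1) = 1$ pin down $G'(0) = 1-\theta$ and hence $G(s) = 1 - (1-s)^{1-\theta}$.

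Finally, expanding $(1-s)^{1-\theta}$ as a generalised binomial series and rewriting the Pochhammer products through the identity $(-1)^{k-1}\binom{1-\theta}{k} = (1-\theta)\Gamma(k-1+\theta)/[\Gamma(\theta)\,\Gamma(k+1)]$ yields the claimed Beta-Geometric formula for $\rho_M(1/k)$. The main technical hurdle I foresee is the justification that $f_s$ is admissible in the invariance identity --- equivalently, that it sits in (or in the closure of a core of) the domain of the Feller generator --- since the excursion/re-entry dynamics at $0$ must be shown not to contribute extra terms beyond the pointwise $Q$-matrix action; once that is in place, what remains is a one-line ODE computation and a routine coefficient extraction.
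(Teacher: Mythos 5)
Your route is genuinely different from the paper's. The paper obtains $\rho_M$ from the Dellacherie--Meyer occupation formula for the excursion measure, $\rho_M(1/k)=\mathbb{Q}\bigl(\int_0^\zeta \mathbf{1}_{\{\epsilon(t)=1/k\}}\,{\rm d}t\bigr)/\mathbb{Q}(\zeta)$, evaluating the numerator by the strong Markov property of excursions as the hitting probability $p_{n,k}$ of \eqref{pnk} times the mean holding time at $k$ blocks, and then normalising; a by-product is the identity \eqref{qsigma} for $\mathbb{Q}(\sigma_{1/n}<\zeta)$, which the paper reuses in the proof of Theorem \ref{Hausthm}. Your generator/PGF argument replaces all of this with the ODE $\tfrac{c}{2}(1-s)G''=\lambda G'$, and the algebra is correct: summing $\mathcal{L}f_s$ against $\rho_M$ gives $\tfrac{c}{2}s(1-s)G''(s)-\lambda s G'(s)=0$, the boundary conditions $G(0)=0$, $G(1)=1$ pin down $G$, and the coefficient extraction recovers the Beta-Geometric weights. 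What you gain is a shorter, excursion-free computation; what you lose is the formula for $\mathbb{Q}(\sigma_{1/n}<\zeta)$ needed later in the paper, and you must import existence and uniqueness of $\rho_M$ together with $\rho_M(\{0\})=0$ from Theorem \ref{comedown}(i) rather than constructing $\rho_M$ outright.

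The genuine gap is the one you flag at the end, and your proposed fix does not close it. The identity $\int\mathcal{L}f_s\,{\rm d}\rho_M=0$ requires $f_s$ (or its truncations) to lie in the domain of the Feller generator with the generator acting as the pointwise $Q$-matrix expression, and the delicate point is precisely at $0$: for a function $g$ supported on $\{1/n: n\le K\}$ with $g(0)=0$, invariance is not ``immediate'', because one must still control $t^{-1}\mathbb{E}_0[g(M(t))]$, which is of order $t^{-1}\mathbb{P}_0(N(t)\le K)$ as $t\downarrow 0$ --- exactly the singular coming-down-from-infinity behaviour that the truncation was meant to sidestep. The fact that $\rho_M(\{0\})=0$ makes the value of the generator at $0$ irrelevant to the integral, but not to whether $g$ belongs to the domain at all. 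A clean repair is available from the paper's own Lemma \ref{Fellerlem}: the truncated chains $\hat N_k$ are finite-state irreducible Markov chains whose stationary laws $\hat\rho^{(k)}$ satisfy the exact cut-balance relations $c\binom{j+1}{2}\hat\rho^{(k)}(j+1)=\lambda\sum_{i=1}^{j} i\,\hat\rho^{(k)}(i)$ for $j<k$ --- the same recursion your ODE encodes, with ratios independent of $k$ --- and the convergence $p^k_t(i,j)\to p_t(i,j)$ established there lets you identify the limit of $\hat\rho^{(k)}$ with $\rho_M$. With that substitution (truncating the process rather than the test function) your argument is complete.
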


\noindent Another property that can be captured in the recurrent case is that, in the appropriate sense,  the rate of coming down from infinity matches that of Kingman's Coalescent. To this end, let us denote by $\mathbb{P}_{1/n}$, for $n\in\mathbb{N}\cup\{+\infty\}$, the probabilities of $M$.

\begin{theorem}[Speed of coming down from infinity]\label{CDI} Suppose that $0<\theta<1$.
\begin{itemize}
\item[(i)] Let $e^{(\infty)}_{1/k}$ be the expected first hitting time of $1/k$ by $M$ under $\mathbb{P}_0$. Then
\[
e^{(\infty)}_{1/k}=\frac{2}{c(1-\theta)k}.
\]
\item[(ii)]
Let $\epsilon$ denote an excursion of $M$, then
\[
\lim_{t\downarrow0}\frac{t}{\epsilon(t)} = \frac{2}{c}, \quad\text{$\mathbb{Q}$-a.e.}
\]
\end{itemize}
\end{theorem}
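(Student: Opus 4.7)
The plan is to prove (i) by exhibiting an elementary additive martingale for $M$ and applying optional stopping, and to prove (ii) by identifying the initial behaviour of $N=1/\epsilon$ inside an excursion with that of Kingman's coalescent coming down from infinity, whence the classical speed result of \cite{Aldous1999} applies. For (i), a direct computation of the generator of $M$ applied to the identity function $f(x)=x$ at $x=1/n$ ($n\ge 2$) yields
\begin{equation*}
\mathcal{L}f(1/n)=c\binom{n}{2}\Bigl(\frac{1}{n-1}-\frac{1}{n}\Bigr)+\lambda n\Bigl(0-\frac{1}{n}\Bigr)=\frac{c}{2}-\lambda,
\end{equation*}
independent of $n$. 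Because $0$ is non-sticky (Theorem~\ref{comedown}(i)) no compensator mass accrues there, and because prior to $\tau_{1/k}$ the process $M$ takes values only in $\{0\}\cup\{1/n:n\ge k+1\}$, the process $\{M(t\wedge\tau_{1/k})-(c/2-\lambda)(t\wedge\tau_{1/k})\}_{t\ge0}$ is a martingale under $\mathbb{P}_0$. Recurrence (Theorem~\ref{comedown}(i)) ensures $\tau_{1/k}<\infty$ a.s. The uniform bound $M(\cdot\wedge\tau_{1/k})\le 1/k$ combined with optional stopping first yields $\mathbb{E}_0[\tau_{1/k}]\le 2/(ck(1-\theta))<\infty$ via monotone convergence, and then (passing $t\to\infty$ by dominated convergence)
\begin{equation*}
\frac{1}{k}=\Bigl(\frac{c}{2}-\lambda\Bigr)\,e^{(\infty)}_{1/k},
\end{equation*}
from which the stated formula follows.

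For (ii), the key observation is that inside a single excursion of $M$ from $0$ (equivalently, of $N$ from $\infty$) no fragmentation event can have occurred, since any fragmentation instantly sends $N$ to $\infty$ and terminates the excursion. Hence during the excursion $N$ evolves purely by Kingman coalescence: merges $n\mapsto n-1$ at rate $c\binom{n}{2}$, with no other transitions. Using an entrance-law representation of $\mathbb{Q}$ as an appropriate normalised limit of the starting measures $\mathbb{P}_{1/n}$ as $n\to\infty$, and observing that under $\mathbb{P}_{1/n}$ the path of $N$ prior to the first fragmentation is precisely Kingman's coalescent from $n$, one couples the excursion process with a Kingman coalescent $K$ coming down from infinity, pathwise identifying $N=K$ on the interior of the excursion. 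Aldous's a.s.\ identity $\lim_{t\downarrow0}tK(t)=2/c$ from \cite{Aldous1999} then transfers directly to
\begin{equation*}
\lim_{s\downarrow 0}\frac{s}{\epsilon(s)}=\lim_{s\downarrow 0}sN(s)=\frac{2}{c}\qquad\mathbb{Q}\text{-a.e.}
\end{equation*}

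The principal obstacle is the rigorous identification in (ii) of $\mathbb{Q}$ with the (killed) Kingman law on the initial segment of an excursion. This amounts to showing that the entrance behaviour of $M$ at $0$ is governed by the same entrance law as Kingman's coalescent at $\infty$; this should follow from the construction of the FFC process combined with standard excursion theory, since the fragmentation mechanism enters only as the terminal event of each excursion and does not influence the coalescence dynamics beforehand. Once this pathwise identification of $N$ with $K$ inside the excursion is in place, the $\mathbb{Q}$-a.e.\ conclusion follows immediately from the corresponding a.s.\ statement for Kingman's coalescent.
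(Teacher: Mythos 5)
For part (i) your martingale argument is a genuinely different route from the paper's, which instead derives $e^{(\infty)}_{1/k}$ from the stationary distribution of Theorem \ref{localtime} via the mean-return-time identity $\rho_M(1/k)^{-1} = (c\binom{k}{2}+\lambda k)\,\mathbb{E}_{1/k}[\text{return time to } 1/k]$ and then subtracts the expected time to the first fragmentation. Your computation $\mathcal{L}f(1/n)=c/2-\lambda$ for $n\ge 2$ is correct, the skip-free structure does confine $M(\cdot\wedge\tau_{1/k})$ to $\{0\}\cup\{1/n:n>k\}$, and optional stopping then produces the right constant, so the approach is viable and arguably cleaner in that it bypasses Theorem \ref{localtime} entirely. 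The one step you assert rather than prove is that the compensated process is a true martingale under $\mathbb{P}_0$: the path accumulates infinitely many coalescence jumps at the entrance of every excursion, and infinitely many excursions in every right-neighbourhood of a zero of $M$, so Dynkin's formula for finite-activity chains does not apply directly, and ``no compensator mass accrues at $0$'' is not by itself a justification. This can be repaired --- for instance by applying the formula to the truncated chains ${\hat N}_k$ of Lemma \ref{Fellerlem} (or to the restrictions $\Pi^{(n)}$) and passing to the limit, or by checking that the expected total variation of $M$ on $[0,\tau_{1/k}]$ is finite, which follows from $\mathbb{Q}(\sigma_{1/n}<\zeta)\sim n^{\theta}$ --- but as written it is a gap.

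Part (ii) contains a genuine error. It is not true that inside an excursion ``$N$ evolves purely by Kingman coalescence: merges $n\mapsto n-1$ at rate $c\binom{n}{2}$, with no other transitions,'' and there is no pathwise identification of the excursion with a Kingman coalescent $K$. Under $\mathbb{Q}$ the entrance can only be described after conditioning on $\{\sigma_{1/k}<\zeta\}$ (the unconditioned descent from infinity is killed instantly, since the per-state killing probabilities $\sim \theta/j$ are not summable --- which is exactly why $\mathbb{Q}$ is only sigma-finite), and conditioning the competing exponential clocks at state $j$ on coalescence winning leaves the holding time exponential of rate $c\binom{j}{2}+\lambda j$, not $c\binom{j}{2}$. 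So under $\mathbb{Q}_k$ the excursion is a Kingman-like chain with strictly accelerated rates, and the identification you yourself flag as the ``principal obstacle'' is false as stated. The conclusion survives only because the hitting times $\varphi_j=\sum_{i>j}\xi_i$, with $\xi_i$ exponential of rate $c\binom{i}{2}+\lambda i$, still satisfy $\mathbb{E}[\varphi_j]\sim 2/(cj)$ and $\mathrm{Var}[\varphi_j]\sim 4/(3c^2j^3)$, the same asymptotics as for Kingman's coalescent, after which Aldous's Borel--Cantelli argument can be rerun. This asymptotic matching of moments, rather than a pathwise coupling, is precisely what the paper's proof does, and it is the step your argument is missing.
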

We can see the phenomenon in Theorem \ref{CDI} (ii) in Figure \ref{fig} by zooming on the entrance of an excursion of $M$ from $0$. {\color{black} Since the local behaviour of an excursion only has coalescing events as in Kingman's coalescent, {\color{black} albeit that the coalescence rates are slightly adjusted to accommodate for suppressing fragmentation},  and since $M=1/N$ one might expect $(ii)$ in view of  \eqref{speedk}. In fact, the proof of this result will actually use part of the proof in \cite{Aldous1999}. Finally, a natural question in excursion theory is to ask for the Hausdorff dimension of the zero set of $M$.

\begin{theorem}[Hausdorff dimension]\label{Hausthm}
Suppose $0 < \theta < 1$.  Let $Z = \{t: M(t) = 0\}$.  Then the Hausdorff dimension of $Z$ equals $\theta$ almost surely.
\end{theorem}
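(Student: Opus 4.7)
My approach reduces the claim to computing the Blumenthal--Getoor upper index of the inverse local time subordinator. By Theorem~\ref{comedown}(i) and standard excursion theory, $Z$ equals almost surely the closed range of the inverse local time $\tau$, a driftless subordinator whose Laplace exponent $\Phi(q) = \mathbb{Q}(1-e^{-q\zeta})$ is expressed in terms of the length $\zeta$ of a generic excursion under the excursion measure $\mathbb{Q}$. The classical Blumenthal--Getoor--Horowitz theorem equates the Hausdorff dimension of the range of such a subordinator with its upper index $\beta = \inf\{\alpha > 0 : \limsup_{q\to\infty} q^{-\alpha}\Phi(q) = 0\}$, so the claim $\dim_H Z = \theta$ is equivalent to $\Phi(q) \asymp q^\theta$ as $q \to \infty$. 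By Karamata's Tauberian theorem this in turn amounts to the small-time tail estimate
\[
\mathbb{Q}(\zeta > t) \asymp t^{-\theta} \qquad\text{as } t \downarrow 0.
\]

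The key intermediate quantity is $H_k := \mathbb{Q}(\sup \epsilon \geq 1/k)$, which I would evaluate via a gambler's-ruin calculation on the embedded jump chain of $N$. From state $n > k$, the next event is a coalescence (with probability $(n-1)/(n-1+\theta)$, where $\theta = 2\lambda/c$) or a shattering to $\infty$ (with the complementary probability). The probability $p_n$ of reaching $k$ from $n$ before any fragmentation therefore satisfies $p_n = \tfrac{n-1}{n-1+\theta}\,p_{n-1}$ with $p_k = 1$, yielding
\[
p_n = \prod_{j=k}^{n-1}\frac{j}{j+\theta} = \frac{\Gamma(n)\,\Gamma(k+\theta)}{\Gamma(k)\,\Gamma(n+\theta)} \sim \frac{\Gamma(k+\theta)}{\Gamma(k)}\,n^{-\theta} \qquad (n \to \infty).
\]
Using the entrance-from-infinity construction of $\mathbb{Q}$, whose rate is pinned down by the Kingman asymptotic $tN(t) \to 2/c$ of Theorem~\ref{CDI}(ii), one then identifies $H_k = c_0\,\Gamma(k+\theta)/\Gamma(k) \asymp k^\theta$ for some positive $c_0$ depending on the normalisation of local time.

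The tail of $\zeta$ now follows by sandwiching with Theorem~\ref{CDI}(ii), which says that $\mathbb{Q}$-a.e.\ excursion satisfies $\epsilon(t) \sim ct/2$ as $t \downarrow 0$. For the upper bound, $\zeta > t$ forces $\sup\epsilon \geq \epsilon(t) \geq (ct/2)(1-o(1))$, so $\mathbb{Q}(\zeta > t) \leq H_{k(t)} \lesssim t^{-\theta}$ for $k(t) \sim 2/(ct)$. For the lower bound, set $\sigma_{1/k} := \inf\{s : \epsilon(s) \geq 1/k\}$; inverting Theorem~\ref{CDI}(ii) gives $\sigma_{1/k} \sim 2/(ck)$ on $\{\sigma_{1/k} < \infty\}$, so $\zeta \geq \sigma_{1/k}$ there and $\mathbb{Q}(\zeta > (2/(ck))(1-o(1))) \geq H_k \gtrsim k^\theta$, i.e.\ $\mathbb{Q}(\zeta > t) \gtrsim t^{-\theta}$. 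Karamata's theorem then delivers $\Phi(q) \asymp q^\theta$, and the Hausdorff-dimension claim follows.

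The principal technical difficulty lies in justifying the entrance-from-infinity construction of $\mathbb{Q}$ with sufficient precision to read off $H_k \asymp k^\theta$ from the $p_n$ asymptotic, since $\mathbb{Q}$ is $\sigma$-finite and tied to an arbitrary normalisation of local time. A clean route is the Itô-decomposition identity $e^{(\infty)}_{1/k}\,H_k = \mathbb{Q}[\zeta \wedge \sigma_{1/k}]$, which when combined with Theorem~\ref{CDI}(i)--(ii) simultaneously pins down $H_k$ and the constant $c_0$ consistently. Fortunately, only the order $H_k \asymp k^\theta$, not the precise value of $c_0$, enters the final dimension computation, making the argument robust against the choice of local-time scale.
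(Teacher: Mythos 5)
Your strategy---reduce the theorem to the growth rate of the Laplace exponent $\Phi$ of the inverse local time, and obtain $\Phi(q)\asymp q^{\theta}$ from a two-sided tail bound $\mathbb{Q}(\zeta>t)\asymp t^{-\theta}$---is genuinely different from the paper's. The paper explicitly declines the Laplace-exponent route as ``neither straightforward nor enlightening'' and instead proves the upper bound by covering $Z\cap[0,\tau_1]$ with the $G_k$ inter-excursion intervals (using Theorem \ref{CDI}(i) and $\mathbb{E}[G_k]\lesssim k^{\theta}$) and the lower bound by the mass distribution principle applied to the local-time measure, via a Borel--Cantelli estimate on the increments of $L^{-1}$ over dyadic local-time intervals. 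Your key quantitative input, $H_k=\mathbb{Q}(\sup\epsilon\geq 1/k)=\mathbb{Q}(\sigma_{1/k}<\zeta)\asymp k^{\theta}$, is correct (excursions of $M$ are monotone until the terminal fragmentation, and this is exactly \eqref{qsigma} of the paper), and your identity $e^{(\infty)}_{1/k}H_k=\mathbb{Q}(\zeta\wedge\sigma_{1/k})$ is a valid first-passage decomposition. One slip worth flagging: Horowitz's theorem gives $\dim Z$ as the \emph{lower} index $\liminf_{q\to\infty}\log\Phi(q)/\log q$, not the Blumenthal--Getoor upper index you quote; this is harmless once $\Phi(q)\asymp q^{\theta}$ is in hand, but the two indices are not interchangeable in general.

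The genuine gap is the step converting Theorem \ref{CDI}(ii) into the tail bounds on $\zeta$. The statement $\lim_{t\downarrow0}t/\epsilon(t)=2/c$ holds $\mathbb{Q}$-a.e., so the error in $\epsilon(t)\geq(ct/2)(1-o(1))$ is excursion-dependent: for a \emph{fixed} small $t$ there is a set of excursions, of a priori non-negligible $\mathbb{Q}$-measure, on which $\epsilon(t)$ is still far below $ct/2$ (and likewise $\sigma_{1/k}$ far from $2/(ck)$). An a.e.\ limit under a $\sigma$-finite measure gives no uniform control at a fixed scale, so neither ``$\mathbb{Q}(\zeta>t)\leq H_{k(t)}(1+o(1))$'' nor the corresponding lower bound follows as written. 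Both bounds are repairable, but each needs its own quantitative argument which is precisely where the work lies: for the upper bound, split $\{\zeta>t\}$ according to whether $\sigma_{1/k}<\zeta$ with $k\asymp1/t$, bound the first piece by $H_k\asymp t^{-\theta}$, and bound the second by Markov's inequality using $\mathbb{Q}(\zeta\mathbf{1}_{\{\sigma_{1/k}=\zeta\wedge\sigma_{1/k}\}\!^c})\leq\mathbb{Q}(\zeta\wedge\sigma_{1/k})=e^{(\infty)}_{1/k}H_k\asymp k^{\theta-1}$; for the lower bound, show that conditionally on reaching $k\asymp1/t$ blocks the excursion survives a further time of order $1/k$ with probability bounded away from zero (for instance, the probability of no fragmentation while descending from $k$ to $\lfloor k/2\rfloor$ blocks is at least a constant $\approx 2^{-\theta}$, and the corresponding sum of holding times concentrates at scale $1/k$). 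Without these estimates the claimed equivalence $\mathbb{Q}(\zeta>t)\asymp t^{-\theta}$, and hence the dimension computation, is not established.
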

 }

The rest of the paper is organised as follows. The basis of the analysis as to whether  $\{0\}$ is an absorbing or recurrent state for $M$ centres around the representation of our fast fragmentation-coalescent process as a path functional of a richer  Markov process on the space of exchangeable partitions of the natural numbers.  We therefore commit the next section to addressing this representation. We then prove the four main results in Sections 3, 4,  5 \& 6.

\section{Background}

We start by reviewing the notion of an exchangeable random partition of the natural numbers. 
A partition of $\mathbb{N}$ is a set of subsets $\pi=\{\pi_1,\pi_2,\ldots\}$, such that $\bigcup_{i=1}^\infty \pi_i =\mathbb{N}$ and $\pi_i\cap \pi_j=\emptyset$, $\forall i\neq j$, where, for convenience,  the blocks are ordered by least element. We denote the set of partitions of $\mathbb{N}$ by $\mathcal{P}$. We are also interested in such partitions restricted to $\{1,\ldots,n\}$, which we denote by  $\mathcal{P}_n$. 

It is straightforward to see that a partition $\pi\in\mathcal{P}$ admits an equivalence relation $j\stackrel{\pi}{\sim} k \iff j,k\in\pi_i$ for some $i\in\mathbb{N}$. Suppose $\sigma$ is a permutation of $\mathbb{N}$ with finite support, then it can be applied to $\pi$ to define a new partition $\sigma(\pi)$ using this equivalence relation by letting $j\stackrel{\sigma(\pi)}{\sim} k \iff \sigma(j)\stackrel{\pi}{\sim} \sigma(k)$. In other words, apply $\sigma^{-1}$ to the blocks of $\pi$ to make the blocks of $\sigma(\pi)$, then reorder these blocks by least element. This can be used to define an exchangeable measure.

\begin{definition}
A sigma-finite measure, say $\mu$, on $\mathcal{P}$ is said to be exchangeable  if for all permutations $\sigma$ with finite support, $\sigma(\pi)$ has the same law under $\mu$ as $\pi$.
\end{definition}

We will now state several previous important results to do with exchangeable probability measures. First, we can link measures on the partition space to mass partition measures. A mass paritition is a sequence, in decreasing order, $\textbf{s}=(s_1,s_2,\ldots,)$, with positive elements that sum to at most 1, and the space of all mass partitions is denoted $\mathcal{S}^{\downarrow}$. If we have a mass partition \textbf{s}, we can use this to partition the unit interval $[0,1]$ into subintervals of length $s_1,s_2,\ldots$ We then sample from the uniform distribution on [0,1] recursively $U_1,U_2,\ldots$ to see where they are positioned in this partition of the unit interval. We can define a partition $\pi\in\mathcal{P}$ (known as a paintbox) as follows:
\begin{align*}
j\stackrel{\pi}{\sim}k &\iff \text{$j=k$ or $U_j$ and $U_k$ are in the same block}\\
&\qquad\qquad\quad\:\:\text{in the unit interval partition.}
\end{align*}
Note, it may be the case that $\sum_{i=1}^\infty s_i<1$ and so part of the unit interval is not covered by the partition. In this case, if $U_j$ does not land in any of the $\{s_i: i\geq 1\}$, then $\{j\}$ is taken to be a singleton block of $\pi$. Suppose that for a given $\textbf{s}\in \mathcal{S}^\downarrow$  we write $\rho_{\textbf{s}}$ for the law of the associated paintbox. Kingman \cite{Kingman1978} shows that $\mu$ is an exchangeable probability measure if and only if, for $\pi\in\mathcal{P}$, there exists a probability measure, $\nu$, on $\mathcal{S}^\downarrow$ such that
\begin{equation}
\mu(d\pi) = \int_{\mathcal{S}^\downarrow}\rho_{\bf s}(d\pi)\nu({\rm d}{\bf s}).
\label{equivalence}
\end{equation}
It is also worthy of note that if $\nu$ is not a probability measure, but instead a sigma-finite measure, then $\mu$ is also an exchangeable sigma-finite measure.

As is suggested by  (\ref{equivalence}), 
the asymptotic frequencies of block partitions, defined as 
\begin{equation*}
\abs{\pi_i}=\lim_{n\to\infty}\frac{1}{n}\#\{\pi_i \cap \{1,\ldots,n\}\},
\end{equation*}
exist $\mu$-almost everywhere and, when ranked in decreasing order, correspond in law to $\{s_i: i\geq 1\}$ under $\nu$. A more subtle version of this statement is that all partition blocks contain a either positive fraction of the integers or just a singleton. Moreover  if there are singletons, there are infinitely many of them and the union of all of them has positive asymptotic frequency almost surely. 


\bigskip

Exchangeable fragmentation-coalescence processes (EFC processes) were introduced by Berestycki \cite{BerestyckiJ2004} with applications in physical chemistry and mathematical genetics (amongst others). In short, they superimpose the stochastic mechanisms that drive  homogeneous fragmentation processes (introduced by Bertoin \cite{Bertoin2001}, {\color{black}with other examples from Aldous and Pitman \cite{AldousPitman1998}, Bertoin \cite{Bertoin2000} and Pitman \cite{Pitman1999}}) and exchangeable coalescence processes (first studied by Kingman \cite{Kingman1982} and later generalised by Pitman \cite{Pitman1999}, Sagitov \cite{Sagitov}, Schweinsberg \cite{Schweinsberg2000}, and M\"{o}hle and Sagitov \cite{MS}).

 Little has been done on EFCs in the probabilistic literature beyond the seminal work of Berestycki \cite{BerestyckiJ2004} and we know only of the works \cite{Bertoin2007}
and
\cite{Clement2016}. The former computes the stationary distribution for a specific class of EFCs  {\color{black}(different from ours) }and the latter considers how other some EFCs, which may be regarded as logistic branching processes,  {\color{black}can be analysed} in the light of a duality relation with certain SDEs. The work we present here adds a third contribution in this respect.

 The following definitions are mostly taken from  Berestycki \cite{BerestyckiJ2004}. First, we say that a family of $\mathcal{P}_n$-valued processes, $(\Pi^{(n)})_{n\in\mathbb{N}}$, is compatible if the restriction of $\Pi^{(n+1)}$ to $\{1,\ldots,n\}$ is almost surely equal to $\Pi^{(n)}$. If we have such a family then almost surely this determines a unique $\mathcal{P}$-valued process $\Pi$ such that the restriction of $\Pi$ to $\{1,\ldots,n\}$ is $\Pi^{(n)}$. 
\begin{definition}
A $\mathcal{P}$-valued process $\Pi: = (\Pi(t),t\geq0)$, is an EFC process if
\begin{enumerate}
\item for each $t\geq 0$, $\Pi(t) = (\Pi_1(t), \Pi_2(t),\cdots)$ is an exchangeable partition 
\item its restrictions, $\Pi^{(n)}$, are c\`{a}dl\`{a}g  Markov chains which can only evolve by fragmentation of one block or by coagulation.
\end{enumerate}
\end{definition}

\noindent Note, in the above definition, by c\`{a}dl\`{a}g, we mean with respect to the metric $d(\pi', \pi'') = 1/ \max\{n\in\mathbb{N}: \pi'\cap[n] = \pi''\cap[n]\}$ for $\pi',\pi''\in\mathcal{P}$.

Berestycki \cite{BerestyckiJ2004} shows that all EFC processes can be decomposed in terms of two independent Poisson point processes, one for coalescence and one for fragmentation. Rather than give a full exposition here, we focus on the specific EFC that will lead to the fast fragmentation-coalescent process that we are interested in. 

In order to do so, we need to define the fragmentation and coalescence operators which will form the dynamics of EFC processes. If we have $\pi,\pi'\in\mathcal{P}$ we define the coagulation of $\pi$ by $\pi'$, written ${\bf Coag}(\pi,\pi')$ as a third partition $\pi''$ where
\begin{equation*}
  \pi''_i=\bigcup_{j\in\pi'_i}\pi_j.
\end{equation*}
\begin{sloppypar}In addition, the fragmentation of the $k^{th}$ block of $\pi$ by $\pi'$, written ${\bf Frag}(\pi, \pi', k)$ is a third partition $\tilde{\pi}$ whose blocks are $\pi_i$ for $i\neq k$ and $\pi_k\cap\pi'_j$ for $j\in\mathbb{N}$. These blocks may need to be reordered to ensure that they are still ordered by least element. With these dynamics in place we can define an EFC process.\end{sloppypar}

Next we need to introduce two measures, $C$ and $F$, which give the rates at which fragmentation and coalescence will occur in the aforesaid Poisson point processes. For all $i,j\in\mathbb{N}$ such that $i\neq j$, let $\epsilon_{i,j}$ be the partition which has only one block that is not a singleton, namely the block $\{i,j\}$. We take
$\kappa = \sum_{i<j}\delta_{\epsilon_{i,j}}$ and $C=c\kappa$. This is the exchangeable measure which corresponds to all pairs of blocks coalescing at constant rate $c$. 

For the fragmentation measure,  $F$, as alluded to in the Introduction,  we will work with an extreme case. Specifically, if we define 
$  \textbf{0}=(\{1\},\{2\},\ldots)$, then we take $F =\lambda \delta_{\bf 0}$, for $\lambda>0$. That is to say, each block is fragmented entirely into its constituent singletons. This is a valid exchangeable measure as we can take $\nu_{\textrm{Disl}}=\lambda\delta_{\underline{0}}$, where $\underline{0}$ is the mass partition made of an infinite sequence of zeros. Then, with no erosion present, the paintbox of $\nu_{\textrm{Disl}}$ gives $F$ as required.

Finally we introduce the two independent Poisson point processes and show how they are composed to generate the paths of the exchangeable fast fragmentation-coalescence process (EFFC processes).   Let $PPP_C$ be a Poisson point process on $[0,\infty)\times \mathcal{P}$ with intensity ${\rm d}t\times C(d\pi)$ and let $PPP_F$ be a Poisson point process on $[0,\infty)\times\mathcal{P}\times\mathbb{N}$ with intensity ${\rm d}t\times F(d\pi)\times \#(dk) $ (here $\#$ is the counting measure on $\mathbb{N}$ and we understand the set $[0,\infty)$ to take the role of time). Pick $\Pi(0) = \pi\in\mathcal{P}$ for the starting value of $\Pi$ and we evolve the process $\Pi$ according to arrivals as points in $PPP_C$ and $PPP_F$ as follows. If $t$ is not an atom time of $PPP_C$ or $PPP_F$ then $\Pi(t) = \Pi(t-)$. If $t$ is an atom time of $PPP_C$, then 
$
\Pi_i(t) = {\bf Coag}(\Pi_i(t-),\pi(t)),
$
where $\pi(t)$ is the accompanying mark in $\mathcal{P}$ at the atom time $t$. Finally, if $t$ is a an atom time of $PPP_F$ then 
$
\Pi(t) = {\bf Frag}(\Pi(t-), \pi(t), k(t)),
$
where $(\pi(t),k(t))$ is the accompanying mark in $\mathcal{P}\times\mathbb{N}$ at the atom time $t$. 
A direct consequence of this construction  is that $\Pi$ is a Strong Markov process. We shall denote its probabilities by ${\rm P}_{\pi}$, $\pi\in\mathcal{P}$. Let $[n] = \{1, \cdots, n\}$, for each $n\in\mathbb{N}$. 

One of the main results in Berestycki \cite{BerestyckiJ2004} concerns the existence of a stationary distribution of all EFC processes. In particular, for the EFFC process, we have the following result. 

\begin{theorem}[Berestycki \cite{BerestyckiJ2004}]\label{stdist} For all $\pi\in\mathcal{P}$ such that $\Pi(0) = \pi$,
there exists an exchangeable probability measure $\rho$ on $\mathcal{P}$ such that $\Pi(t)\Rightarrow \rho$ as $t\to\infty$.
\end{theorem}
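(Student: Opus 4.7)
The plan is to reduce the problem to a family of finite state continuous-time Markov chains via the restrictions $\Pi^{(n)}$ and then assemble the resulting stationary laws into a single measure on $\mathcal{P}$.

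First I would fix $n\in\mathbb{N}$ and analyse $\Pi^{(n)}$ on the finite state space $\mathcal{P}_n$. Under the EFFC dynamics, any pair of blocks in $\Pi^{(n)}$ coalesces at rate $c$ and each block fragments into its singletons at rate $\lambda$, so $\Pi^{(n)}$ is a continuous time Markov chain on a finite state space. Irreducibility follows from the fact that, starting from any $\pi\in\mathcal{P}_n$, a sequence of fragmentation events (one per non-singleton block) brings the state to the all-singletons partition $\{\{1\},\ldots,\{n\}\}$ in finite time with positive probability, and from the all-singletons partition any target partition in $\mathcal{P}_n$ is reachable by a suitable sequence of pair coalescences. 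Standard theory for irreducible finite state CTMCs then yields a unique stationary distribution $\rho_n$ on $\mathcal{P}_n$ together with $\Pi^{(n)}(t)\Rightarrow \rho_n$ as $t\to\infty$.

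Next I would establish that the family $\{\rho_n\}_{n\in\mathbb{N}}$ is consistent, in the sense that the pushforward of $\rho_{n+1}$ under the restriction map $\mathcal{P}_{n+1}\to \mathcal{P}_n$ coincides with $\rho_n$. This is inherited from the compatibility of the EFFC restrictions themselves, since $\Pi^{(n+1)}(t)|_{[n]}=\Pi^{(n)}(t)$ almost surely for every $t$, and the relation persists in the limit $t\to\infty$. The Kolmogorov extension theorem, applied to the projective system of partitions, then produces a unique probability measure $\rho$ on $\mathcal{P}$ whose restriction to $\mathcal{P}_n$ equals $\rho_n$ for each $n$. Exchangeability of $\rho$ follows because every $\rho_n$ is invariant under permutations of $[n]$: the coalescence and fragmentation rates do not depend on block labels and the fragmentation outcome is itself exchangeable, so exchangeability is preserved along the dynamics and hence by the stationary distribution. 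Exchangeability of all finite restrictions then gives exchangeability of $\rho$ via the representation (\ref{equivalence}).

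Finally, to upgrade to $\Pi(t)\Rightarrow \rho$ on $(\mathcal{P},d)$, I would use the fact that $d$-convergence is precisely convergence of restrictions to $[n]$ for every $n$: any bounded continuous $f:\mathcal{P}\to\mathbb{R}$ is approximated uniformly by functions depending only on $\pi\cap[n]$ for $n$ large, and convergence of the finite dimensional marginals controls the approximating expectations. The main obstacle I anticipate is the irreducibility claim for the restricted chain; this is immediate in the present model because a single fragmentation event fully shatters a block, but it is exactly the step that would fail, or at least require substantial care, for the more general fragmentation measures considered by Berestycki. A secondary bookkeeping issue is to verify carefully that the metric topology and the projective topology on $\mathcal{P}$ induce the same notion of weak convergence for exchangeable laws; this is standard but should be spelled out to close the argument.
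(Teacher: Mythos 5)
The paper does not actually prove this statement: it is quoted verbatim from Berestycki \cite{BerestyckiJ2004}, so there is no in-paper argument to compare against. Your proof is correct and is essentially the projective-limit strategy of the cited source, specialised to the EFFC model: each restriction $\Pi^{(n)}$ is a finite-state chain, irreducible here because a total shattering reaches the all-singletons state and pairwise coalescences reach any target, hence it has a unique stationary law $\rho_n$ to which it converges; consistency of the $\rho_n$ under restriction plus the extension theorem assembles them into $\rho$, and weak convergence on the compact ultrametric space $(\mathcal{P},d)$ reduces to convergence of the finite restrictions, as you note. You also correctly flag that irreducibility is the one step that is immediate for this model but delicate for general fragmentation measures. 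One cosmetic remark: exchangeability of $\rho$ follows directly from permutation-invariance of each $\rho_n$ (the permuted chain has the same generator, so its unique stationary law coincides with $\rho_n$), with no need to invoke the paintbox representation (\ref{equivalence}).
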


Recall that we write $N(t)$ for the number of blocks in the system $\Pi(t)$ at time $t\geq 0$ (with the understanding that its value may be $+\infty$). In completing this section, let us remark on how the property of right-continuity of $\Pi$ transfers to the setting of the number of blocks.

\begin{proposition}\label{rtcont}
The process $(N(t), t \geq 0)$ has a right-continuous version.
\end{proposition}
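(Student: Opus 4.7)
My plan is to show that $N$ itself is right-continuous at every $t\geq 0$ almost surely, so that no explicit modification is required and $(N(t),t\geq 0)$ serves as its own right-continuous version.

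The structural input I would use is the restriction $\Pi^{(n)}$. For each $n$, the coalescence atoms $\epsilon_{i,j}$ with $i,j\leq n$ together with the fragmentation atoms with $k\leq n$ form a PPP of finite total intensity $c\binom{n}{2}+\lambda n$, so $\Pi^{(n)}$ is a càdlàg Markov chain on the finite set $\mathcal{P}_n$ with only finitely many jumps per compact interval. Consequently $N_n(t):=|\Pi^{(n)}(t)|$ is a càdlàg step function, and from $N_n(t)\uparrow N(t)$ pointwise as $n\to\infty$ one obtains right-lower-semicontinuity: for any sequence $s_j\downarrow t$, right-continuity of $N_n$ gives $N_n(s_j)=N_n(t)$ eventually, so $N(s_j)\geq N_n(t)$, and letting $n\to\infty$ yields $\liminf_{s\downarrow t}N(s)\geq N(t)$.

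To upgrade this to full right-continuity I would split cases on whether $N(t)$ is finite. When $N(t)=\infty$, the lower-semicontinuity estimate already forces $N(s)\to\infty$. When $N(t)<\infty$ then, conditionally on $\Pi(t)$, the only atoms of $PPP_C$ and $PPP_F$ that actually modify $\Pi$ are the coalescence atoms with indices $i,j\leq N(t)$ and the fragmentation atoms with mark $k\leq N(t)$; all other atoms reference non-existent blocks and are no-ops. The effective event rate is therefore $c\binom{N(t)}{2}+\lambda N(t)<\infty$, so almost surely there is a $\delta>0$ with no effective atom in $(t,t+\delta]$, on which interval $\Pi(s)=\Pi(t)$ and hence $N(s)=N(t)$.

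The step requiring the most care is the simultaneous-in-$t$ statement: the previous arguments are pointwise in $t$, giving right-continuity at each fixed $t$ on a $t$-dependent almost-sure event, whereas a right-continuous version demands a single almost-sure event on which right-continuity holds at every $t\geq 0$. I would handle this by observing that, for a fixed sample path, the effective atom times of the restricted PPPs restricted to any region where $N$ is bounded form a locally finite set, so the finite-rate argument produces a clean right-neighborhood on which $N$ is constant at each such time; at points where $N(t)=\infty$ the lower-semicontinuity alone delivers right-continuity; and a small amount of bookkeeping in passing to the boundary between these two regimes, using the strong Markov property at each effective atom time, extends the pointwise conclusion to a path-wise one.
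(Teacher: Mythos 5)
Your proposal is correct and follows essentially the same route as the paper: the case $N(t)=\infty$ is handled by right-continuity of the restrictions $\Pi^{(n)}$ together with lower semicontinuity of the block count (the paper phrases this via convergence in the partition metric, you via $N_n\uparrow N$, which is the same fact), and the case $N(t)<\infty$ by the observation that only finitely many atoms are effective, so the set of times with $k$ blocks is a union of intervals of the form $[s,t)$. Your extra care about making the argument hold simultaneously in $t$ (via local finiteness of the effective atom sets) is a point the paper's proof passes over silently, but it is the same argument.
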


\begin{proof}
Berestycki \cite{BerestyckiJ2004} showed that $\Pi$ is a Feller process and therefore has a right-continuous version.  Proposition \ref{rtcont} does not follow immediately from Berestycki's result because the function which maps a partition in ${\cal P}$ to its number of blocks is not a continuous function with respect to the metric $d(\cdot, \cdot)$ defined above.  However, note that, for all $k \in \mathbb{N}$, the set of times during which the process has $k$ blocks is a union of intervals of the form $[s, t)$.  Therefore, if $N(t) < \infty$, then $N$ is right-continuous at $t$.  That $N$ is right-continuous at $t$ when $N(t) = \infty$ follows from the fact that $\Pi$ has a right-continuous version, and if $(\pi_n)_{n=1}^{\infty}$ is a sequence in ${\cal P}$ that converges to a partition $\pi$ with infinitely many blocks, then the number of blocks of $\pi_n$ tends to infinity as $n \rightarrow \infty$. 
\end{proof}


\section{Proof of Theorem \ref{comedown}}

Let $T = \inf\{t > 0: N(t) < \infty\}$.  The next lemma shows that when $\theta \in(0,1)$, we have $T = 0$ a.s., which establishes that the process instantaneously comes down from infinity. 

\begin{lemma}\label{cdilem}
If $\theta \in(0,1)$ and $\pi \in {\cal P}$, then ${\rm P}_{\pi}(T = 0) = 1$.
\end{lemma}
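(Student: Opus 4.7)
The plan is to show $\mathbb{P}_\pi(N(t) < \infty) = 1$ for every $t > 0$; combined with Proposition~\ref{rtcont}, this forces $T = 0$ almost surely. The case of $\pi$ with finitely many blocks is trivial, so I focus on $\pi$ with $N(\pi)=\infty$. By a coupling argument (the all-singletons partition is the ``worst'' initial condition from which to come down, since any other infinite-block start can be mapped to it by an arbitrarily short burst of fragmentation events), it suffices to take $\pi = \{\{1\},\{2\},\ldots\}$. I then work with the restricted Markov chain $\Pi^{(n)}$ on $\mathcal{P}_n$---in which each pair of blocks coalesces at rate $c$ and each block shatters into singletons at rate $\lambda$---and use that $N^{(n)}(t) \nearrow N(t)$ almost surely, so the task reduces to establishing the uniform-in-$n$ tail bound $\sup_n \mathbb{P}(N^{(n)}(t)>K)\to 0$ as $K\to\infty$.

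A Markov bound through $\mathbb{E}[N^{(n)}(t)]$ does \emph{not} suffice, since the differential inequality $u_n'\le -c\tbinom{u_n}{2}+\lambda n$ (from Jensen) has stationary value of order $\sqrt{\theta n}$, which diverges with $n$. Guided by the heavy tail $\sim K^{\theta-1}$ suggested by Theorem~\ref{localtime}, I target the fractional moment $v_\alpha(t):=\mathbb{E}[(N^{(n)}(t))^\alpha]$ for some $\alpha\in(0,1-\theta)$. The generator yields
\[
v_\alpha'(t) = c\,\mathbb{E}\!\Bigl[\tbinom{N^{(n)}}{2}\bigl((N^{(n)}\!-\!1)^\alpha-(N^{(n)})^\alpha\bigr)\Bigr] + \lambda\,\mathbb{E}\!\Bigl[\sum_{i:b_i\geq 2}\bigl((N^{(n)}\!+\!b_i\!-\!1)^\alpha-(N^{(n)})^\alpha\bigr)\Bigr].
\]
By concavity of $x\mapsto x^\alpha$, the coalescence term is $\le -(c\alpha/4)\,\mathbb{E}[(N^{(n)})^{\alpha+1}]$ once $N^{(n)}\geq 2$. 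The fragmentation term I would control by the identity $\sum_{i:b_i\ge 2}(b_i-1)=n-N^{(n)}$ together with a size-based split of blocks---using $(k+b-1)^\alpha-k^\alpha\le \alpha(b-1)k^{\alpha-1}$ for $b\le k$ and subadditivity $(k+b-1)^\alpha - k^\alpha\le b^\alpha$ for $b>k$. The goal is to absorb the fragmentation contribution into the negative coalescence term and deduce $v_\alpha(t)\le C(t)$ uniformly in $n$. Markov's inequality plus monotone convergence then give $\mathbb{P}(N(t)>K)\le C(t)K^{-\alpha}\to 0$.

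The main obstacle is precisely the fragmentation estimate: the dominant contribution arises from a few macroscopic blocks whose typical size (in the invariant regime) is $\sim\sqrt{n/\theta}$, and the condition $\theta<1$ must be used decisively to ensure their $\alpha$-moment contribution is dominated by the coalescence recovery of order $-\mathbb{E}[(N^{(n)})^{\alpha+1}]$. This is where the phase-transition threshold enters the analysis, in the spirit of the Lyapunov arguments of Berestycki~\cite{BerestyckiJ2004}, adapted to the extreme fragmentation $F=\lambda\delta_{\mathbf{0}}$ that violates his conditions (L) and (H)---hence the need for the size-split rather than a straightforward use of a finiteness assumption on $F$.
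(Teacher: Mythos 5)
Your reduction of the lemma to the statement that $\mathbb{P}_\pi(N(t)<\infty)=1$ for each fixed $t>0$ is sound, and the use of the restricted chains $\Pi^{(n)}$ with monotone convergence is the same device the paper uses. However, the core of your argument --- absorbing the fragmentation contribution to $\frac{d}{dt}\mathbb{E}[(N^{(n)}(t))^\alpha]$ into the negative coalescence term --- cannot work, and the gap is structural rather than a matter of sharpening the estimates. Your own bound gives a fragmentation contribution of at least order $\lambda\alpha\,k^{\alpha-1}(n-k)$ against a coalescence term of order $-(c\alpha/4)k^{\alpha+1}$; these balance only when $k\gtrsim\sqrt{\theta n}$, so for all states with $k\ll\sqrt{n}$ blocks the drift of $N^\alpha$ is \emph{positive}, and the threshold that appears is $k$ versus $\sqrt{n}$, not $\theta$ versus $1$. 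The extreme case makes this unmistakable: from the state $[n]$ with a single block, the generator applied to $N^\alpha$ equals $\lambda(n^\alpha-1)$, which is positive and diverges with $n$ for every $\alpha>0$ and every $\theta$. Since the instantaneous drift of any function of $N^{(n)}$ alone depends on the block sizes and is badly positive precisely in the low-block-count states where the process spends most of its time, no first-order differential inequality for $v_\alpha(t)$ closed in $v_\alpha$ can deliver a bound uniform in $n$. (The quantity $v_\alpha(t)$ \emph{is} uniformly bounded for $\alpha<1-\theta$, but the reason is that the process repairs a fragmentation in time $O(1/k)$ --- an occupation-time fact invisible to the generator at a single instant.) A secondary, fixable issue is the coupling reduction to the all-singletons start, which as stated ("an arbitrarily short burst of fragmentation events") is not an argument.

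The paper proves the lemma by exactly the occupation-time/renewal decomposition that your drift computation misses: it computes $p_{n,k}$, the probability of descending from $n$ to $k$ blocks with no fragmentation, bounds the expected duration $\mathbb{E}_n[\tau_\infty]$ of one "attempt," and bounds the expected hitting time of a state with $k$ blocks by $p_{n,k}^{-1}\mathbb{E}_n[\tau_\infty]$ (a geometric number of attempts times the cost of each). The condition $\theta<1$ enters through the convergence of $\sum_k k^{\theta-2}$, which makes this bound finite uniformly in $n$ and the initial partition, and sends it to $0$ as $k\to\infty$; monotone convergence along the restrictions $\Pi^{(n)}$ then gives $T=0$. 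If you want to rescue a moment-based route, you would need a Lyapunov function of the whole partition (not just of $N$), or you should switch to estimating $\mathbb{P}(N(t)>k)$ via the expected time spent above level $k$, which is essentially the paper's argument in disguise.
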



\begin{proof}
We first look at the probability of hitting the state with 1 block without any fragmentation event occurring when the process starts with $n$ blocks.  Suppose we label this probability $p_{n,1}$.  Recall that $\theta:=2\lambda/c$.  From the definition of the model, it is straightforward to see that
\begin{align}
  p_{n,1}
  =\prod_{k=2}^n\frac{c\binom{k}{2}}{\lambda k +c\binom{k}{2}}
=\prod_{k=2}^n\frac{k-1}{k-1+\theta}
=\frac{\Gamma(n)\Gamma(1+\theta)}{\Gamma(n+\theta)}\label{pn}
\sim\Gamma(1+\theta) n^{-\theta},
\end{align}
 as $n$ tends to infinity. We now find an expression for the expected time until the first fragmentation event. 
 Write $\mathbb{P}_n$ to denote probabilities when the process starts with $n$ blocks.
 First we find the probability that the first fragmentation event occurs when the process is in a state with $k$ blocks, which we denote $r^{(n)}_k$. To this end, define $\tau_k = \inf\{t>0 : N(t) =k\}$  for $k\in\mathbb{N}\cup\{+\infty\}$, and see that
\begin{eqnarray}
r_{k}^{(n)}&=  &\mathbb{P}_n(N(\tau_\infty-) = k)=\frac{\lambda k}{\lambda k +c\binom{k}{2}}\prod_{j=k+1}^n\frac{c\binom{j}{2}}{\lambda j +c\binom{j}{2}}\notag\\
&=&\frac{\theta}{k-1+\theta}\prod_{j=k+1}^n\frac{j-1}{j-1+\theta}=\frac{\theta}{k-1+\theta}\frac{\Gamma(k+\theta)\Gamma(n)}{\Gamma(n+\theta)\Gamma(k)}\notag\\
&=&\frac{\theta\Gamma(n)\Gamma(k-1+\theta)}{\Gamma(n+\theta)\Gamma(k)}.
\label{rnk}
\end{eqnarray}
Next we aim to bound the expected time for the process to reach a state with $k$ blocks if there is no fragmentation event. Note that $N$ has skip-free downward paths so that 
\begin{eqnarray}
t_{k}^{(n)}&=&  \mathbb{E}_n[\tau_k |\tau_k<\tau_\infty]=\sum_{j=k+1}^n\mathbb{E}_j[\tau_{j-1}| \tau_{j-1}<\tau_\infty]\notag\\
&=&\sum_{j=k+1}^n\frac{2}{2\lambda j + c j(j-1)}\leq\sum_{j=k+1}^n\frac{2}{c j(j-1)}\notag\\
&\leq&\frac{2}{c k}.\label{tnk}
\end{eqnarray}
We can combine these last two quantities to bound the expected time until the first fragmentation event. We split over the number of blocks when the first fragmentation event can occur to see that, with the help of \eqref{rnk} and \eqref{tnk}, 
\begin{eqnarray*}
  \mathbb{E}_n[\tau_\infty]&=&\sum_{k=1}^n\left(t_{k}^{(n)}+\frac{1}{\lambda k +c\binom{k}{2}}\right) r_{k}^{(n)}\\
&\leq&\sum_{k=1}^n\left(\frac{2}{ck}+\frac{1}{\lambda k +c\binom{k}{2}}\right)\frac{\theta\Gamma(n)\Gamma(k-1+\theta)}{\Gamma(n+\theta)\Gamma(k)}\\
&\leq&\frac{\Gamma(n)}{\lambda c\Gamma(n+\theta)}\sum_{k=1}^n\frac{(2\lambda+c)\theta\Gamma(k-1+\theta)}{\Gamma(k+1)}.
\end{eqnarray*}

For the next part of the proof, consider a slightly different Markov process on $\mathcal{P}_n$, say $\widehat{\Pi}^{(n)}$, where the rates of coalescence and fragmentation are the same as $\Pi^{(n)}$ (the process $\Pi$ restricted to $\{1, \dots, n\}$), but when fragmentation occurs, we return to the state $(\{1\}, \{2\}, \cdots, \{n\})$.
We can consider, for $\widehat{\Pi}^{(n)}$, the number of times the system attempts to descend to the state $[n]$ in which all integers are in a single block from the initial state $(\{1\}, \{2\}, \cdots,\{n\})$. (The `failure' event corresponds to the event  that a fragmentation occurs during a sojourn from singletons  to $[n]$.) It is   a geometric random variable with success rate $p_{n,1}$ and the expected value of this random variable (expected number of attempts until success) will be $p_{n,1}^{-1}$.   The above tells us the expected amount of time each failure will take, so recalling that ${\bf 0}$ denotes the partition of $\mathbb{N}$ into singletons, we have $${\rm E}_{{\bf 0}}[\text{Time for $\widehat{\Pi}^{(n)}$ to hit $[n]$}] \leq p_{n,1}^{-1} \mathbb{E}_n[\tau_\infty].$$  The expected time for $\widehat{\Pi}^{(n)}$ to hit $[n]$ could only decrease if the process started from a different initial state.  Furthermore, it is easy to see that the expected time to hit $[n]$ from any given initial state is larger for $\widehat{\Pi}^{(n)}$ than for $\Pi^{(n)}$.
Therefore, letting $\pi \in {\cal P}$ be a partition with infinitely many blocks, we have
$$e_{1}^{(n)}:= {\rm E}_{\pi}[\text{Time  for $\Pi^{(n)}$ to hit $[n]$}]\\
\leq p_{n,1}^{-1}\mathbb{E}_n[\tau_\infty].$$
Hence, by \eqref{pn}, we have
\begin{eqnarray*}
 e_{1}^{(n)}&\leq&\frac{1}{\lambda c}\left(\frac{\Gamma(n+\theta)}{\Gamma(n)\Gamma(1+\theta)} \right)\frac{\Gamma(n)}{\Gamma(n+\theta)}\sum_{k=1}^n\frac{(2\lambda+c)\theta\Gamma(k-1+\theta)}{\Gamma(k+1)}\\
&\leq& \frac{1}{\lambda c\Gamma(\theta)} \sum_{k=1}^{n}\frac{(2\lambda+c)\Gamma(k-1+\theta)}{\Gamma(k+1)}\\
&\leq& D\sum_{k=1}^{n}k^{\theta-2}\\
&\leq& D\sum_{k=1}^{\infty}k^{\theta-2},
\end{eqnarray*}
where $D$ is a constant that does not depend on $n$ or on the initial state $\pi$. The above is finite if and only if $0<\theta<1$, uniformly for all $n$. It follows that if $\theta\in(0,1)$, then $\sup_n e^{(n)}_1<\infty$. 

Now let $\tau_1^{(n)} = \inf\{t: \Pi^{(n)}(t) = [n]\}$ be the first time that the integers $1, \dots, n$ are all in the same block.  Then $\tau^{(1)}_1\leq\tau^{(2)}_1\leq\cdots,$ 
and so $\tau^{(n)}_1$ increases to some limit $\tau^{(\infty)}_1$. As $\sup_{n}e^{(n)}_1<\infty$, it follows from the Monotone Convergence Theorem that ${\rm E}_{\pi}[\tau^{(\infty)}_1]<\infty$ and so $\tau^{(\infty)}_1<\infty$ a.s.  Hence, $N(\tau^{(\infty)}_1)=1$, and so $\tau^{(\infty)}_1=\tau_1$.  It follows that $\tau_1$ and thus $T$ are almost surely finite.

To prove that $T=0$ almost surely,  we bound the expected time for $N$ to hit $k$ from initial state $n$ uniformly in $n$ and prove that this converges to 0 as $k\to\infty$. 
The proof is very similar to the argument above. To this end, let $p_{n,k}$ be the probability of $N$ hitting the state with $k$ blocks before a fragmentation event occurs. A similar calculation to \eqref{pn} gives 
\begin{equation}
  p_{n,k}=\frac{\Gamma(k+\theta)\Gamma(n)}{\Gamma(n+\theta)\Gamma(k)}.\label{pnk}
\end{equation}
Then using the same argument as for $ e_{1}^{(n)}$, the expected time to hit a state with $k$ blocks, written $ e_{k}^{(n)}$, satisfies
\begin{eqnarray*}
   e_{k}^{(n)}&\leq& p_{n,k}^{-1}\mathbb{E}_n[\tau_\infty]\notag\\
&\leq& \left(\frac{\Gamma(n+\theta)\Gamma(k)}{\Gamma(k+\theta)\Gamma(n)}\right)\frac{\Gamma(n)}{\lambda c\Gamma(n+\theta)}\sum_{j=1}^n\frac{(2\lambda +c)\theta\Gamma(j-1+\theta)}{\Gamma(j+1)}\notag\\
&\leq&\frac{\Gamma(k)}{\lambda c\Gamma(k+\theta)}\sum_{j=1}^{\infty}\frac{(2\lambda+c)\theta\Gamma(j-1+\theta)}{\Gamma(j+1)}, \label{zero}
\end{eqnarray*}
and so
\begin{equation*}
\sup_n e^{(n)}_k\leq\frac{\Gamma(k)}{\lambda c\Gamma(k+\theta)}\sum_{j=1}^{\infty}\frac{(2\lambda+c)\theta\Gamma(j-1+\theta)}{\Gamma(j+1)}\to0,
\end{equation*}
as $k\to\infty$, because the series is finite for $0<\theta<1$.  Let $\tau_k^{(n)}$ denote the first time the process $\Pi_n$ has $k$ blocks.  Then $\tau^{(k)}_k\leq\tau^{(k+1)}_k\leq\cdots$, so $\tau^{(n)}_k$ increases to some limit $\tau^{(\infty)}_k$. Also, by a similar argument to one given  earlier, $N(\tau^{(\infty)}_k)=k$ and so $\tau^{(\infty)}_k=\tau_k$.  Thus,
\begin{equation*}
0 \leq {\rm E}_{\pi}[T] \leq e^{(\infty)}_k\to0,
\end{equation*}
as $k\to\infty$.  It follows that $T = 0$ almost surely, as required.
\end{proof}

The next result shows that, when the process starts from the partition of the positive integers into singletons, although the process immediately comes down from infinity, there are also fragmentation events at arbitrarily small times which cause the number of blocks to become infinite.


\begin{lemma}\label{regularlem}
Let $S = \inf\{t>0:N(t)=\infty\}$.  If $\theta\in(0,1)$ and $\pi$ is a partition with infinitely many blocks, then ${\rm P}_{\pi}(S=0)=1$.
\end{lemma}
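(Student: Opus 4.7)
The plan is to prove $\mathrm{E}_\pi[S] = 0$, which immediately yields $\mathrm{P}_\pi(S=0) = 1$. The key decomposition is $S \leq \tau_n + W_n$, where $\tau_n := \inf\{t > 0 : N(t) = n\}$ and $W_n$ denotes the waiting time after $\tau_n$ until the next fragmentation event. I will argue that both $\mathrm{E}_\pi[\tau_n]$ and $\mathrm{E}[W_n]$ tend to $0$ as $n \to \infty$.

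The crucial structural ingredient is the observation that at any time $t > 0$ with $N(t) < \infty$, every block of $\Pi(t)$ is almost surely infinite. Between fragmentation events the dynamics reduce to Kingman's coalescent, which (on coming down from infinity) absorbs every singleton into an infinite block at any positive time; likewise, the infinite total coalescence rate at $N(0) = \infty$ immediately absorbs any finite non-singleton blocks of $\pi$ at time $0+$. Since fragmentation times form a countable random set, $\tau_n$ is almost surely not one of them, and hence $\Pi(\tau_n)$ consists of exactly $n$ infinite blocks almost surely.

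The bound $\mathrm{E}_\pi[\tau_n] \leq \sup_m e_n^{(m)} \to 0$ as $n \to \infty$ is available directly from the argument in the proof of Lemma~\ref{cdilem}. By the strong Markov property at $\tau_n$, the law of $W_n$ equals that of $\tau_\infty$ under $\mathbb{P}_n$, and the bound in the proof of Lemma~\ref{cdilem} gives $\mathbb{E}_n[\tau_\infty]$ of order $\Gamma(n)/\Gamma(n+\theta) \sim n^{-\theta}$, which vanishes as $n \to \infty$ for $\theta \in (0,1)$. Moreover, between $\tau_n$ and $\tau_n + W_n$ only coalescences occur, which preserve the ``all infinite'' property of the blocks, so at time $\tau_n + W_n$ the shattered block is infinite and $N$ jumps to $\infty$, confirming $S \leq \tau_n + W_n$. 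Adding the two bounds gives $\mathrm{E}_\pi[S] \to 0$ as $n \to \infty$.

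The main obstacle will be the rigorous justification of the ``all blocks infinite at $\tau_n$'' claim, which blends the paintbox representation of Section~2, the instantaneous no-dust property of Kingman's coalescent at positive times, and a measure-theoretic argument to handle the stopping time $\tau_n$.
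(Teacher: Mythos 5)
Your proof is correct, and it takes a genuinely different (and somewhat more economical) route than the paper. The paper's proof works with the count $F_k$ of fragmentation events occurring before $N$ first reaches $k$: each fragmentation forces $N$ back up above $n$ before another descent can be attempted, so $F_k$ stochastically dominates a Geometric$(p_{n,k})$ random variable for every $n\geq k$; letting $n\to\infty$, where $p_{n,k}\to0$ by \eqref{pnk}, gives $F_k=\infty$ almost surely, and combining this with $\tau_k\downarrow0$ (from Lemma \ref{cdilem}) places visits to $\infty$ at arbitrarily small times. You instead write $S\leq\tau_n+W_n$ and bound ${\rm E}_\pi[S]\leq{\rm E}_\pi[\tau_n]+\mathbb{E}_n[\tau_\infty]\to0$, reusing exactly the two quantitative estimates already derived in the proof of Lemma \ref{cdilem}; this avoids the geometric-domination step and yields ${\rm E}_\pi[S]=0$ directly. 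One remark: the structural fact you single out as the main obstacle --- that when $N(t)<\infty$ every block is infinite, so the first fragmentation after $\tau_n$ really does send $N$ to $\infty$ --- is equally needed, and equally left unproved, in the paper's own argument, where it is absorbed into the $Q$-matrix entry $Q_{i,\infty}=\lambda i$ and into the assertion that after a fragmentation ``the process must first return to a state with $n$ blocks''; so I do not count it against you. Your sketched justification is essentially the right one, though the phrase ``between fragmentation events'' needs care, since effective fragmentations are dense in time whenever $N=\infty$: the clean statement is that each block of $\Pi(t)$ with $N(t)<\infty$ arises as a ${\bf Coag}$ of infinitely many earlier blocks (no dust in Kingman's coalescent at positive times), and the stopping time $\tau_n$ is handled via the interval structure of $\{t:N(t)=n\}$. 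Since you only need the property on $[\tau_n,\tau_n+W_n)$, where only coalescences occur, the argument closes.
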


\begin{proof}
Let $F_k$ be the number of fragmentations that occur before the first time the number of blocks reaches $k$,
so that $p_{n,k} = \mathbb{P}_n(F_{k}=0)$ is the probability that $N$ drops from $n$ to $k$ without a fragmentation occurring. When a fragmentation occurs, the process $N$ must first return to a state with $n$ blocks before it can reach a state with $k$ blocks. {\color{black}Hence, we can conclude that, for all $n \geq k$, the random variable $F_{k}$ stochastically dominates a geometric random variable with success probability $p_{n,k}$. Therefore, appealing to \eqref{pnk}, for all $j, k \in \mathbb{N}$, and $n\geq k$ we have 
\begin{equation*}
{\rm P}_{\pi}(F_{k}>j) \geq \left(1-\frac{\Gamma(k+\theta)\Gamma(n)}{\Gamma(n+\theta)\Gamma(k)}\right)^j.
\end{equation*}
Hence, as the right-hand side converges to 1 as $n\to\infty$, we can therefore conclude that for all $k \in \mathbb{N}$, the number of fragmentation events that occur before the process reaches any state with $k$ blocks is infinite almost surely.}  The result follows.
\end{proof}


\begin{lemma}\label{stickylem}
If $\theta\in(0,1)$ and $\pi \in {\cal P}$, then $\int_0^{\infty} {\bf 1}_{\{N(t) = \infty\}} \: {\rm d}t = 0, \: \:{\rm P}_{\pi}$-almost surely.
\end{lemma}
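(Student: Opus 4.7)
The plan is to use Fubini's theorem to pass from an almost-sure statement to a pointwise probability statement. Since the integrand is non-negative,
\[
{\rm E}_\pi\!\left[\int_0^{\infty} e^{-t}\mathbf{1}_{\{N(t)=\infty\}}\,{\rm d}t\right]=\int_0^{\infty} e^{-t}\,{\rm P}_\pi(N(t)=\infty)\,{\rm d}t,
\]
so it suffices to show that ${\rm P}_\pi(N(t)=\infty)=0$ for every $t>0$; this forces $\int_0^\infty e^{-t}\mathbf{1}_{\{N(t)=\infty\}}\,{\rm d}t=0$ ${\rm P}_\pi$-a.s., and since $e^{-t}>0$ the desired conclusion follows.

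To prove the pointwise vanishing, fix $t>0$ and split with the stopping time $\tau_k=\inf\{s\geq 0: N(s)\leq k\}$. By the strong Markov property at $\tau_k$,
\[
{\rm P}_\pi(N(t)=\infty)\leq {\rm P}_\pi(\tau_k>t/2)+{\rm E}_\pi\!\left[\mathbf{1}_{\{\tau_k\leq t/2\}}\,{\rm P}_{\Pi(\tau_k)}(N(t-\tau_k)=\infty)\right].
\]
The first summand is at most $2e^{(\infty)}_k/t$ by Markov's inequality together with the uniform bound $\sup_{\pi'}{\rm E}_{\pi'}[\tau_k]\leq e^{(\infty)}_k$ derived in the proof of Lemma~\ref{cdilem}, and tends to $0$ as $k\to\infty$.

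For the second summand I would invoke the Poisson-point-process construction of the EFFC from Section~2. The only mechanism by which $N$ can jump from a finite value back to $\infty$ is an atom of $PPP_F$ that shatters a block of infinite size; call these ``infinite-block fragmentation'' times. Starting from a state with $N(\pi')\leq k<\infty$, the rate at which such atoms occur is bounded above by $\lambda k$, so on any interval during which $N$ remains finite they form a genuinely discrete set. Between two consecutive infinite-block fragmentation atoms the dynamics reduce to Kingman's coalescent together with harmless finite-block fragmentations; the classical coming-down property of Kingman's coalescent combined with Lemma~\ref{cdilem} applied at each atom then shows that $N(u)<\infty$ for every $u$ strictly between atoms. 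Consequently the set $\{u\geq 0: N(u)=\infty\}$ is contained almost surely in a countable union of atom times, and so for any fixed $t>0$ the event $\{N(t)=\infty\}$ reduces to the event ``$t$ is such an atom time'', which has probability zero since the law of any particular $PPP_F$ atom is non-atomic in time.

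The main obstacle is making the discreteness of the infinite-block fragmentation atoms rigorous without circularity: if a priori the set $\{s:N(s)=\infty\}$ could have positive Lebesgue measure, the rate of such atoms on that set would be formally infinite and they could accumulate densely. The resolution is to bootstrap from the finite-$N$ regime (in which discreteness is unconditional, the atoms occurring at bounded Cox rate $\lambda N$) and then to propagate the Lebesgue-null property across all of $[0,\infty)$ by the strong Markov property together with Lemma~\ref{cdilem}, which guarantees the process returns to a finite-$N$ state immediately after each atom.
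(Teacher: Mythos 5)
Your Fubini reduction and the bound on ${\rm P}_\pi(\tau_k>t/2)$ are fine, but the core of the argument --- the claim that $\{u\geq 0: N(u)=\infty\}$ is almost surely contained in a countable set of fragmentation atom times --- is false, and the paper itself shows why. By Lemma~\ref{regularlem}, started from any partition with infinitely many blocks the process returns to the state $\infty$ at arbitrarily small positive times; the proof of that lemma shows that infinitely many infinite-block fragmentations occur before $N$ can reach any finite level $k$. So after each visit to $\infty$ there is no ``next'' infinite-block fragmentation atom and no interval ``between consecutive atoms'' on which the dynamics are purely Kingman: the relevant atoms accumulate immediately to the right of every point of $Z=\{u: N(u)=\infty\}$, and $Z$ contains the uncountably many accumulation points so produced. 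Indeed Theorem~\ref{Hausthm} shows that $Z$ has Hausdorff dimension $\theta>0$ almost surely, so it cannot be countable. Your closing paragraph correctly identifies this as the obstacle, but the proposed bootstrap does not close it: Lemma~\ref{cdilem} gives $T=0$, i.e.\ finite states are visited at arbitrarily small times after hitting $\infty$, which is entirely compatible with $Z$ having an uncountable, perfect-set structure near each such time and, a priori, even positive Lebesgue measure (compare a fat Cantor set, which has empty interior but positive measure). Establishing ${\rm P}_\pi(N(t)=\infty)=0$ for each fixed $t$ is essentially equivalent, via Fubini in both directions, to the lemma itself; the paper in fact deduces that pointwise statement \emph{from} Lemma~\ref{stickylem} in the proof of Lemma~\ref{Fellerlem}, not the other way around.

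The paper's proof proceeds by a different, occupation-time route that avoids any structural claim about $Z$: for the restricted processes $\Pi^{(n)}$ it computes the expected time $g_k^{(n)}$ spent with exactly $k$ blocks before absorption in $[n]$, shows $g_k^{(n)}\leq g_k^{(\infty)}=p_{k,1}^{-1}\big(\lambda k+c\binom{k}{2}\big)^{-1}$ by a visit-counting argument, and then uses the identity ${\rm E}_\pi[\tau_1^{(\infty)}]=\sum_{k\geq 2}g_k^{(\infty)}+g_\infty^{(\infty)}\geq {\rm E}_\pi[\tau_1^{(n)}]+g_\infty^{(\infty)}$ together with the monotone convergence ${\rm E}_\pi[\tau_1^{(n)}]\uparrow {\rm E}_\pi[\tau_1^{(\infty)}]<\infty$ (from Lemma~\ref{cdilem}) to force $g_\infty^{(\infty)}=0$. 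If you want to repair your approach, you would need an argument of this quantitative type in place of the countability claim.
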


\begin{proof}
Let $N_n(t)$ be the number of blocks in the partition $\Pi^{(n)}(t)$.  For $k, n \in \mathbb{N} \cup \{+ \infty\}$ with $2 \leq k \leq n$, let $$g_k^{(n)} = {\rm E}_{\pi} \bigg[ \int_0^{\tau_1^{(n)}} {\bf 1}_{\{N_n(t) = k\}} \: {\rm d}t \bigg]$$ be the expected amount of time for which the process $\Pi^{(n)}$ has $k$ blocks, before the time $\tau_1^{(n)}$.  To calculate $g_k^{(\infty)}$, note that each time the process $N$ visits the state $k$, it has probability $p_{k,1}$ of reaching the state $1$ before fragmentation, and if there is a fragmentation the process must return to $k$ before reaching the state $1$.  Therefore, assuming there are initially at least $k$ blocks, the process makes $p_{k,1}^{-1}$ visits to $k$ on average before time $\tau_1^{(\infty)}$, and thus $$g_k^{(\infty)} = \frac{1}{p_{k,1}} \cdot \frac{1}{\lambda k + c \binom{k}{2}}.$$  When the process $N_n$ visits $k$, there is still probability $p_{k,1}$ that the process reaches the state with one block before fragmentation, but the process could have fewer than $k$ blocks after fragmentation in which case it could still have another chance to reach the state $1$ before returning to $k$.  It follows that $$g_k^{(n)} \leq g_k^{(\infty)}$$ for $2 \leq k \leq n$.  Thus, $${\rm E}_{\pi}[\tau_1^{(\infty)}] = \sum_{k=2}^{\infty} g_k^{(\infty)} + g_{\infty}^{(\infty)} \geq \sum_{k=2}^n g_k^{({n})} + g_{\infty}^{(\infty)} = {\rm E}_{\pi}[\tau_1^{(n)}] + g_{\infty}^{(\infty)}.$$  Because the times $\tau_1^{(n)}$ increase to $\tau_1^{(\infty)}$, which has finite mean as shown in the proof of Lemma \ref{cdilem}, it follows by letting $n \rightarrow \infty$ and using the Monotone Convergence Theorem that $g_{\infty}^{(\infty)} = 0$.  That is, with probability one, the set of times that $N$ spends in the state $\infty$ before time $\tau_1^{(\infty)}$ has Lebesgue measure zero.  This is sufficient to establish the result.
\end{proof}

To prove part (i) of Theorem \ref{comedown}, it remains to show that $M$ and $N$ are strong Markov processes.  It is known from results of Berestycki \cite{BerestyckiJ2004} that the partition-valued EFFC process $\Pi$ is a Feller process.  However, while the processes $M$ and $N$ clearly evolve in a Markovian way when there are only finitely many blocks, one could be concerned about whether the Markov property holds when there are infinitely many blocks, especially in view of the unusual behavior described in Lemmas \ref{cdilem} and \ref{regularlem}.  In particular, there is the question of whether knowing that the partition has infinitely many blocks provides sufficient information about the partition to determine how the number of blocks evolves in the future.  The lemma below settles this question.

\begin{lemma}\label{Fellerlem}
If $\theta(0,1)$, then $(M(t), t \geq 0)$ and $(N(t), t \geq 0)$ are Feller processes.
\end{lemma}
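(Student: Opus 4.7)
The plan is to verify the two defining conditions for the Feller property on the compact state space $E := \{1/n : n \in \mathbb{N}\} \cup \{0\}$: (a) the semigroup $P_t f(x) := \mathbb{E}_x[f(M(t))]$ maps $C(E)$ to itself, and (b) $P_t f(x) \to f(x)$ as $t \downarrow 0$. Condition (b) follows immediately from Proposition \ref{rtcont}: right-continuity of $N$, hence of $M$, combined with bounded convergence yields $\mathbb{E}_x[f(M(t))] \to f(x)$ for all $f \in C(E)$. The substantive content is (a), which has two components: (i) well-definedness of the semigroup on $E$, namely that the law of $M(t)$ under $\mathbb{P}_\pi$ depends on $\pi$ only through $|\pi|$, and (ii) continuity of $P_t f$ at the accumulation point $0$, since every other point of $E$ is isolated.

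For $|\pi| = n < \infty$ with all blocks of positive asymptotic frequency---the situation the process occupies at typical positive times---well-definedness follows from the $Q$-matrix recorded in the introduction, whose entries depend only on the block count. The delicate case is $|\pi| = \infty$; my plan is to apply the strong Markov property of $\Pi$ at the hitting times $\tau_n := \inf\{t > 0 : N(t) = n\}$. By Lemma \ref{cdilem} and the down-skip-free, right-continuous nature of $N$, we have $\tau_n < \infty$ almost surely, $\tau_n \downarrow 0$ as $n \to \infty$, and $N(\tau_n) = n$ exactly. On the event $\{\tau_n < t\}$, strong Markov yields $\mathbb{E}_\pi[f(M(t)) \mid \mathcal{F}_{\tau_n}] = \mathbb{E}_{1/n}[f(M(t - \tau_n))]$, so
\[
\mathbb{E}_\pi[f(M(t))] = \mathbb{E}_\pi\bigl[\mathbf{1}_{\{\tau_n < t\}} \mathbb{E}_{1/n}[f(M(t - \tau_n))]\bigr] + \mathbb{E}_\pi\bigl[f(M(t))\mathbf{1}_{\{\tau_n \geq t\}}\bigr].
\]
The second term is bounded by $\|f\|_\infty \mathbb{P}_\pi(\tau_n \geq t)$, which vanishes as $n \to \infty$; interchanging limit and expectation in the first term (using $\tau_n \downarrow 0$ together with a time-continuity estimate on $s \mapsto \mathbb{E}_{1/n}[f(M(s))]$ near $s = t$) yields $\mathbb{E}_\pi[f(M(t))] = \lim_{n \to \infty} \mathbb{E}_{1/n}[f(M(t))]$, which is manifestly independent of $\pi$. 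This simultaneously establishes well-definedness of $P_t f(0)$ and the convergence $P_t f(1/n) \to P_t f(0)$, i.e., continuity of $P_t f$ at $0$.

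The main obstacle is this limit interchange: one must show, without invoking the Feller property being proved, that $s \mapsto \mathbb{E}_{1/n}[f(M(s))]$ is sufficiently continuous near $s = t$ that replacing $t - \tau_n$ by $t$ incurs vanishing error as $n \to \infty$. A clean route is to observe that the chain from $1/n$ prior to its first visit to $0$ is an ordinary finite-state Markov chain with bounded exit rates, for which the required time-continuity is standard, and then to control separately the tail probability that a return to $0$ intervenes in the short random interval $[t - \tau_n, t]$. Once $P_t : C(E) \to C(E)$ is in hand, the Markov property of $M$ passes directly from the strong Markov property of $\Pi$, and the Feller property for $N$ follows via the homeomorphism $x \mapsto 1/x$ between $E$ and $\mathbb{N} \cup \{\infty\}$.
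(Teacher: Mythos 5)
Your overall strategy---reduce the behaviour from an infinite partition to the behaviour from $1/n$ via the strong Markov property at $\tau_n$ and let $n\to\infty$---is close in spirit to the final step of the paper's argument, but as written it is circular at the decisive point. The identity $\mathbb{E}_\pi[f(M(t))\mid\mathcal{F}_{\tau_n}]=\mathbb{E}_{1/n}[f(M(t-\tau_n))]$ already presupposes that the law of the block-counting functional started from a partition with $n$ blocks depends only on $n$ and defines a Markov evolution; you justify this by pointing to the $Q$-matrix, but that matrix only describes the dynamics up to the first fragmentation event. After that event the partition again has infinitely many blocks, and whether the subsequent law of $N$ is determined by ``infinitely many blocks'' alone is exactly the well-definedness question you set out to answer. (It is genuinely at issue: the jump triggered by fragmenting a block depends on the structure of that block, e.g.\ whether it is infinite, so the law of $N$ is \emph{not} a function of the block count for arbitrary initial partitions.) The paper breaks this circle with a device you do not have: the time-changed processes $\hat N_k(t)=N(S_k(t))$, in which all excursions above level $k$ are excised. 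These are honest finite-state Markov chains by construction --- no information about the process at infinite partitions is needed to define them --- and Lemma \ref{stickylem} (non-stickiness of $\infty$) gives $S_k(t)\downarrow t$, hence $\hat N_k(t)\to N(t)$ a.s., so the transition function $p_t(i,j)$ of $N$ is obtained as $\lim_{k\to\infty} p_t^k(i,j)$ rather than assumed. Only then is a strong Markov decomposition of your kind applied, and to $\hat N_k$ at the hitting time $\zeta_{n,k}$ rather than to $N$ itself.

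A second, lesser gap is the limit interchange you flag as the ``main obstacle''. The exit rate from the state with $n$ blocks is $c\binom{n}{2}+\lambda n$, so the modulus of continuity of $s\mapsto\mathbb{E}_{1/n}[f(M(s))]$ that you would get from ``finite-state chain with bounded exit rates'' degenerates as $n\to\infty$; since $\tau_n$ is only known to tend to $0$ almost surely, you would need either a quantitative comparison between $\tau_n$ and that modulus, or an argument that near the deterministic time $t$ the process sits in a moderate state with high probability uniformly in $n$. Neither is supplied. In the paper this step is replaced by the monotone convergence $S_k(t+\zeta_{n,k})\downarrow t+\tau_n$ combined with the right-continuity of $N$ from Proposition \ref{rtcont}, which sidesteps any equicontinuity estimate.
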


\begin{proof}
For $k \in \mathbb{N}$ and $t \geq 0$, let $S_k(t) = \inf\{u: \int_0^u {\bf 1}_{\{N(s) \leq k\}} \: ds > t \}.$  Then let $${\hat N}_k(t) = N(S_k(t)), \qquad t\geq 0.$$  Note that the process ${\hat N}_k$ is the same as the original process $N$, except that the periods during which the partition has more than $k$ blocks are cut out.  After every fragmentation event, the process ${\hat N}_k$ jumps to $k$.  Therefore, $({\hat N}_k(t), t \geq 0)$ is a continuous-time Markov chain with state space $\{1, \dots, k\}$ and transition rates $\hat{Q}_{j, j-1}
= c \binom{j}{2}$ for $2 \leq j \leq k$ and $
\hat{Q}_{j,k}= \lambda j$ for $1 \leq j \leq k-1$.  Let $p^k_t(i, j) = P({\hat N}_k(s+t) = j|{\hat N}_k(s) = i)$,  $t\geq 0$, $i,j
\in\{1,\dots, k\}$,  denote the transition probabilities associated with this chain.

Because $\int_0^{\infty} {\bf 1}_{\{N(t) = \infty\}} \: {\rm d}t = 0$ a.s. by Lemma \ref{stickylem}, it follows that for all $t \geq 0$, we have $S_k(t) \downarrow t$ a.s. as $k \rightarrow \infty$.  Since $(N(t), t \geq 0)$ is right-continuous by Proposition \ref{rtcont}, it follows that ${\hat N}_k(t) \rightarrow N(t)$ a.s. as $k \rightarrow \infty$.  Therefore, for all times $t_1 < \dots < t_m$ and positive integers $j_1, \dots, j_m$, we have
$$\lim_{k \rightarrow \infty} P({\hat N}_k(t_1) = j_1, \dots, {\hat N}_k(t_m) = j_m) = P(N(t_1) = j_1, \dots, N(t_m) = j_m)$$ by the Dominated Convergence Theorem.  By applying this result when $m = 1$ and the initial partition has $i$ blocks, we obtain for all $i \in \mathbb{N}$, $j \in \mathbb{N}$, and $t > 0$, the existence of the limit
$$p_t(i, j) := \lim_{k \rightarrow \infty} p_t^k(i, j).$$  Likewise, by considering an initial condition in which the partition has infinitely many blocks, we obtain for all $j \in \mathbb{N}$ and $t > 0$ the existence of the limit $$p_t(\infty, j) := \lim_{k \rightarrow \infty} p_t^k(k, j).$$  Also, because $\int_0^{\infty} {\bf 1}_{\{N(t) = \infty\}} \: {\rm d}t = 0$ a.s., it is not hard to see that ${\rm P}_{\pi}(N(t) = \infty) = 0$ for all $t > 0$ and $\pi \in {\cal P}$.  Therefore, for all $t > 0$, we let $p_t(i, \infty) := 0$ for all $i \in \mathbb{N} \cup \{+\infty\}$.  It then follows that, if $\pi$ has $i$ blocks, then for $j_1, \dots, j_m \in \mathbb{N} \cup \{+\infty\}$,
\begin{eqnarray*}
\lefteqn{{\rm P}_{\pi}(N(t_1) = j_1, \dots, N(t_m) = j_m)}\\
& &\qquad = p_{t_1}(i, j_i) p_{t_2 - t_1}(j_1, j_2) \dots p_{t_m - t_{m-1}}(j_{m-1}, j_m).
\end{eqnarray*}
Thus, $(N(t), t \geq 0)$ is a continuous-time Markov process with transition probabilities $p_t$.

It remains to check that $N$, and therefore $M$, is Feller.  Let $f:\mathbb{N}\cup\{+\infty\}\rightarrow(0,\infty)$ be a continuous function, which in this setting means that $\lim_{n \rightarrow \infty} f(n) = f(\infty)$.  Note that the function $f$ must be bounded.  Using $\mathbb{P}_n$ to denote the law of $N$ started from $n$, we need to show that
\begin{enumerate}
\item For all $n \in \mathbb{N} \cup \{+\infty\}$, we have $\lim_{t \rightarrow 0} \mathbb{E}_n[f(N(t))] = f(n)$.

\item For all $t > 0$, the function $n \mapsto \mathbb{E}_n[f(N(t))]$ is continuous.
\end{enumerate}
The first of these claims follows immediately from the right continuity of $(N(t), t \geq 0)$, see Proposition \ref{rtcont},
the boundedness of $f$, and the Dominated Convergence Theorem.   To prove the second claim, we need to show that $\lim_{n \rightarrow \infty} \mathbb{E}_n[f(N(t))] = \mathbb{E}_{\infty}[f(N(t))]$.  It suffices to show that for all $j \in \mathbb{N}$ and $t > 0$, we have $$\lim_{n \rightarrow \infty} p_t(n,j) = p_t(\infty, j).$$  For $n \leq k < \infty$, let $\zeta_{n,k} = \inf\{t: {\hat N}_k(t) = n\}$.  Observe that $S_k(t + \zeta_{n,k}) \downarrow t + \tau_n$ as $k \rightarrow \infty$.  Also, if the initial partition has infinitely many blocks, then $\tau_n \downarrow 0$ as $n \rightarrow \infty$ by Lemma \ref{cdilem}.  Therefore, using the right continuity on $(N(t), t \geq 0)$ in the first two lines and the strong Markov property of $({\hat N}_k(t), t \geq 0)$ in the third line, we get
\begin{align*}
p_t(\infty, j) &= \lim_{n \rightarrow \infty} {\rm P}_{{\bf 0}}(N(t + \tau_n) = j) \\
&= \lim_{n \rightarrow \infty} \lim_{k \rightarrow \infty} {\rm P}_{{\bf 0}}({\hat N}_k(t + \zeta_{n,k}) = j) \\
&= \lim_{n \rightarrow \infty} \lim_{k \rightarrow \infty} p_t^k(n,j) \\
&= \lim_{n \rightarrow \infty} p_t(n,j),
\end{align*}
which completes the proof.
\end{proof}

\begin{proof}[Proof of Theorem \ref{comedown}]
The process $M$ is a Feller process, and thus strong Markov, by Lemma \ref{Fellerlem}.  That $0$ is a regular point of $M$ follows from Lemma \ref{regularlem}, and that $0$ is not a holding point follows from Lemma \ref{cdilem}.  That $0$ is non-sticky is a consequence of Lemma \ref{stickylem}.  The proof of part $(i)$ of Theorem \ref{comedown} is now complete.

To prove part $(ii)$ of the theorem, we consider the excursions from infinity of the process $N$ and show that no such excursions can exist. Let $u_k$ be the expected waiting time in a state with $k$ blocks, specifically
\begin{equation*}
  u_k=\frac{2}{2\lambda k + c k(k-1)}.
\end{equation*}
Fix $t>0$, and let $s_k(t)$ be the expected time spent in a state with $k$ blocks during excursions that started before time $t$. Also, let $E_k(t)$ be the expected number of excursions started before time $t$ that reach a state with $k$ blocks. Then 
\begin{equation*}
  s_k(t)=E_k(t)\cdot u_k,
\end{equation*}
and in particular
\begin{align*}
  s_1(t)&=E_1(t)\cdot u_1\\
&=\frac{1}{\lambda}E_k(t)\cdot p_{k,1},
\end{align*}
because once an excursion has reached a state with $k$ blocks, the probability that it reaches a state with just 1 block is $p_{k,1}$ from \eqref{pn}.
Hence, for all $k\in\mathbb{N}$ we have
\begin{equation*}
  \frac{s_k(t)}{s_1(t)}=\frac{\lambda u_k}{p_{k,1}},
\end{equation*}
as long as $s_k(t)>0$ for all $k$. Thus, assuming for a contradiction, that this is the case we have that
\begin{align*}
  \sum_{k=1}^{\infty}\frac{s_k(t)}{s_1(t)}&=\frac{2\lambda}{c}\sum_{k=1}^{\infty}\frac{1}{p_{k,1}}\frac{1}{k(k-1+\theta)}\\
&=\theta\sum_{k=1}^{\infty}\frac{\Gamma(k+\theta)}{\Gamma(k)\Gamma(1+\theta) k(k-1+\theta)},
\end{align*}
which is $+\infty$ if, and only if, $\lambda/c\geq1/2$, as then $\theta\geq1$. However, the expected total time spent on excursions that start before time $t$ is finite almost surely, so we must conclude that $s_k(t)=0$ for all $k\in\mathbb{N}$. That is $\Pi$ stays infinite almost surely.
\end{proof}
\newpage
\section{Proof of Theorem \ref{localtime}}

\begin{proof}[Proof of Theorem \ref{localtime}]
 On account of the fact that $M$ is a Feller (and hence strong Markov) process, standard theory now allows us to invoke the existence of a local time at zero for $M$, denoted by $L = (L_t: t\geq 0)$, such that the right inverse of $L$, say $L^{-1}$, is a pure jump subordinator.  As $0$ is instantaneous regular for $M$, it follows that $L^{-1}$ has   infinite activity. The periods of time where the process is in a state with finitely many blocks correspond to the the excursions away from zero for $M$. Moreover, the fact that the state $0$ is not sticky for $M$ implies that $L^{-1}$ is pure jump with no drift component. Classical results from excursion theory, cf. Chapter XIX.46 of Dellacherie and Meyer \cite{DellacherieMeyer1978}, give us a way to construct the stationary distribution, $\rho_M$, of $M$ using the excursion measure. To this end, let us introduce the canonical space of excursions, that is c\`adl\`ag measurable mappings  $\epsilon: (0,\zeta]\rightarrow \{1/n: n\in\mathbb{N}\}\cup\{0\}$, where $\zeta = \inf\{t>0: \epsilon(t) = 0\}$ and $\lim_{t\downarrow 0}\epsilon(t) = 0$, with associated excursion measure (not a probability measure) $\mathbb{Q}$.

First note, that $\rho_M$ has no atom at zero as it is not a sticky point, equivalently, the inverse local time has no linear component. Again, referring to Chapter XIX.46 of Dellacherie and Meyer \cite{DellacherieMeyer1978}, we have that,  when $\mathbb{Q}(\zeta)<\infty$, for $k\in\mathbb{N}$,

\begin{equation*}
\rho_M(1/k)=\frac{\mathbb{Q}\left(\int_0^{\zeta}\mathbf{1}_{\{\epsilon(t)=1/k\}}{\rm d}t\right)}{\mathbb{Q}(\zeta)}.
\end{equation*}
 It's difficult to compute this quantity directly starting from zero, however, we can appeal to a technique that uses the strong Markov property for excursions; see Section VI.48 of \cite{RW}. Let $\sigma_{1/n}=\inf\{t>0:\epsilon(t)=1/n\}$, then we see that, for $n>k$,
\begin{equation*}
\mathbb{Q}\left(\int_0^{\zeta}\mathbf{1}_{\{\epsilon(t)=1/k\}}{\rm d}t\right)=\mathbb{Q}(\sigma_{1/n}<\zeta)\mathbb{E}_{n}\left[\int_0^{S}\mathbf{1}_{\{N(t)=k\}}{\rm d}t\right],
\end{equation*}
where $\mathbb{E}_{n}$ is expectation of the process given we start in any state with $n$ blocks and $S$ is the time of the first fragmentation event and therefore matches $\zeta$. In any excursion, $\epsilon$, $\Pi$ visits a state with $k$ blocks only once at most. This is because once we are in a state with fewer than $k$ blocks there must be a fragmentation event before $\Pi$ can be in a state with $k$ blocks again, which means it will be a different excursion. Hence, the above expectation can break down into the probability of reaching a state with $k$ blocks given you start in one with $n$, times the expected amount of time spent in a state $k$ blocks before leaving, thus, appealing to \eqref{pnk} we see that
\begin{align}
\mathbb{Q}\left(\int_0^{\zeta}\mathbf{1}_{\{\epsilon(t)=1/k\}}{\rm d}t\right)&=\mathbb{Q}(\sigma_{1/n}<\zeta)p_{n,k}\frac{2}{ck(k-1)+2\lambda k}\notag\\
&=\mathbb{Q}(\sigma_{1/n}<\zeta)\frac{\Gamma(n)}{\Gamma(n+\theta)}\frac{2}{c}\frac{\Gamma(k-1+\theta)}{\Gamma(k+1)}.\label{rhoM}
\end{align}
Therefore, as the left-hand side is positive and finite, we may conclude that there exists a constant $C$ such that
\begin{equation}\label{qsigma}
\mathbb{Q}(\sigma_{1/n}<\zeta)\frac{\Gamma(n)}{\Gamma(n+\theta)}=C.
\end{equation}
As excursion measures are only defined up to a multiplicative constant we can take $C=1$ without loss of generality. In addition, from \eqref{rhoM}, we have that 
\begin{equation*}
\rho_M(1/k)\propto \frac{2}{c}\frac{\Gamma(k-1+\theta)}{\Gamma(k+1)}.
\end{equation*}
Note that the right hand side of this equation is $O(k^{-(2-\theta)})$ and so, as $\theta\in(0,1)$, we can normalise this into a probability measure using the constant $Z^{-1}$ where
\[
Z^{-1}:=\frac{2}{c}\sum_{k=1}^\infty\frac{\Gamma(k-1+\theta)}{\Gamma(k+1)}=\frac{\Gamma(1+\theta)}{\lambda(1-\theta)},
\]
which gives the desired result.
\end{proof}

\section{Proof of Theorem \ref{CDI}}
\begin{proof}[Proof of Theorem \ref{CDI}]
$(i)$ Basic Markov chain theory tells us that the stationary distribution probabilities for each state are the inverse of the mean return time from that state {\color{black}multiplied by the jump rate from that state.} Hence  
\begin{align*}
\frac{1}{\rho_M(1/k)}&=\left(c\binom{k}{2}+\lambda k\right)\mathbb{E}_{1/k}[\text{Time for $M$ to return to $1/k$}] \\
&=\left(c\binom{k}{2}+\lambda k\right)\left(\mathbb{E}_{1/k}[\text{Time to hit 0}]+e^{(\infty)}_{1/k}\right),
\end{align*}
as to return to a state with $k$ blocks you must first fragment, then come down from infinity and reach a state with $k$ blocks once more. Rearranging this shows that
\begin{equation*}
e^{(\infty)}_{1/k}=\frac{1}{\rho_M(1/k)(c\binom{k}{2}+\lambda k)}-\mathbb{E}_{1/k}[\text{Time to first fragmentation event}].
\end{equation*}
Using Theorem \ref{localtime}, we see that
\begin{equation*}
e^{(\infty)}_{1/k}=\frac{\Gamma(\theta)\Gamma(k+1)}{(1-\theta)\Gamma(k-1+\theta)(c\binom{k}{2}+\lambda k)}-\sum_{j=1}^k p_{k,j}\frac{\lambda j}{c\binom{j}{2}+\lambda j}\sum_{i=j}^k\frac{1}{c\binom{i}{2}+\lambda i},
\end{equation*}
as the expected time to the first fragmentation event can be split over how many blocks you have just before you fragment. Therefore, using \eqref{pnk},
\begin{align*}
e^{(\infty)}_{1/k}&=\frac{2\Gamma(\theta)\Gamma(k)}{c(1-\theta)\Gamma(k-1+\theta)(k-1+\theta)}\\&\qquad\qquad-\sum_{j=1}^k \frac{\Gamma(j+\theta)\Gamma(k)}{\Gamma(k+\theta)\Gamma(j)}\frac{\theta}{j-1+\theta}\sum_{i=j}^k\frac{2}{ci(i-1)+2\lambda i}\\
&=\frac{2\Gamma(\theta)\Gamma(k)}{c(1-\theta)\Gamma(k+\theta)}-\theta\sum_{j=1}^k \frac{\Gamma(j-1+\theta)\Gamma(k)}{\Gamma(k+\theta)\Gamma(j)}\sum_{i=j}^k\frac{2}{ci(i-1)+2\lambda i}\\
&=\frac{2\Gamma(\theta)\Gamma(k)}{c(1-\theta)\Gamma(k+\theta)}-\frac{2\theta\Gamma(k)}{c\Gamma(k+\theta)}\sum_{i=1}^k\frac{1}{i(i-1+\theta)}\sum_{j=1}^i\frac{\Gamma(j-1+\theta)}{\Gamma(j)},\\
\intertext{which, by standard results for sums of Gamma functions, gives us that} 
e^{(\infty)}_{1/k}&=\frac{2\Gamma(\theta)\Gamma(k)}{c(1-\theta)\Gamma(k+\theta)}-\frac{2\Gamma(k)}{c\Gamma(k+\theta)}\frac{\Gamma(\theta)\Gamma(k+2)-(k+1)\Gamma(k+\theta)}{(1-\theta)\Gamma(k+2)}\\
&=\frac{2}{c(1-\theta)k},
\end{align*}
as required.

\bigskip

$(ii)$ Let $k\in\mathbb{N}$. Denote by $\mathbb{Q}_k$ the measure $\mathbb{Q}$ conditioned on $\{\zeta>\sigma_{1/k}\}$, where $\zeta$ is the excursion length. {\color{black}As the reasoning below shows, under $\mathbb{Q}_k$, an excursion from} 0 of $M$ looks like a scaled Kingman coalescent, but with slightly accelerated rates, until it reaches a state with $k$ blocks. Hence, we can use Aldous' construction of Kingman's coalescent \cite{Aldous1999}. Define random variables $\varphi_j$, for $j\geq 1$, as 
\begin{equation*}
\varphi_j=\sum_{i=j+1}^{\infty}\xi_i,
\end{equation*}
where $\xi_i$ are independent exponential, with rate $c\binom{i}{2}+\lambda i$. Under the aforementioned conditioning, $\varphi_j$ is the hitting time of a state with $j$ blocks when $j\geq k$. Let $U_j$ be iid uniform random variables on (0,1), $j\geq1$. Then for all $j$, draw a vertical line of length $\varphi_j$ at point $U_j$ on the unit interval. At time $t$, where $\varphi_j<t<\varphi_{j-1}$, $j\geq k$, look at the subintervals of [0,1] with endpoints $\{0,1,U_1,\ldots,U_{j-1}\}$. The lengths of the subintervals have the same distribution as the asymptotic frequencies of the blocks of $\Pi$ conditional on $\zeta>\sigma_{1/k}$, when $\Pi$ has $j$ blocks. 

Then, under the measure $\mathbb{Q}_k$, for large $j$ we have
\begin{equation*}
\mathbb{E}[\varphi_j]=\sum_{i=j+1}^{\infty}\frac{1}{c\binom{i}{2}+\lambda i}\sim\frac{2}{cj}, \text{ and }  \textrm{Var[$\varphi_j$]}=\sum_{i=j+1}^{\infty}\left(\frac{1}{c\binom{i}{2}+\lambda i}\right)^2\sim\frac{4}{3c^2j^3},
\end{equation*}
which is the exact same asymptotic behaviour as for a Kingman coalescent of rate $c$. Hence we may conclude, using Aldous' method as used for the Kingman coalescent case, that
\begin{equation*}
\lim_{t\downarrow0}\frac{t}{\epsilon(t)}=\lim_{t\downarrow0}tN(t)=\frac{2}{c},
\end{equation*}
$\mathbb{Q}_k$-almost surely. As this is independent of $k$, we may conclude this occurs $\mathbb{Q}$-almost everywhere. 
\end{proof}

{\color{black}
\section{Proof of Theorem 4}
{\color{black}Horowitz \cite{Horowitz1972} gives a standard result for the Hausdorff dimension of the range of the inverse local time as the polynomial growth of Laplace exponent. Specifically, 
\begin{equation*}
\dim(Z)=\liminf_{q\to\infty}\frac{\log \Phi(q)}{\log q},
\end{equation*}
where $\Phi$ is the Laplace exponent of the inverse local time. However, for the present case this computation is neither straightforward nor enlightening, so we instead pursue direct upper and lower bounds. } Hence, we will need two well-known facts about the Hausdorff dimension.  The first is the countable stability property, which is discussed in Section 4.1 of \cite{mp}.  If $E = \cup_{i=1}^{\infty} E_i$, then $$\dim(E) = \sup_{i \geq 1} \: \dim(E_i).$$  The second is the mass distribution principle, which is discussed in section 4.2 of \cite{mp}.  If there is a finite nonzero measure $\mu$ on $E$ and positive constants $C > 0$ and $\delta > 0$ such that $\mu(V) \leq C|V|^{\alpha}$ for all closed subsets $V$ of $E$ with $|V| \leq \delta$, then $\dim(E) \geq \alpha$.

\begin{proof}[Proof of Theorem \ref{Hausthm}]
Recall that $\tau_1 = \inf\{t: M(t) = 1\}$.  Let $Z' = Z \cap [0, \tau_1]$.  By the Markov property of the EFFC process and the countable stability property of Hausdorff dimension, to show that $\dim(Z) \leq \theta$ almost surely, it suffices to show that $\dim(Z') \leq \theta$ almost surely.  

Choose $\alpha \in (\theta, 1)$, and fix a large positive integer $k$. Let $G_k$ be the number of excursions of $M$ that reach $1/k$ before the process $M$ reaches $1$, which has a geometric distribution with parameter $p_{k,1}$. Therefore, by (\ref{pn}), there is a positive constant $C_1$ such that $E[G_k] \leq C_1 k^{\theta}$. 

Set $\gamma_{1,k} = 0$, and for positive integers $n$, let $\beta_{n,k} = \inf\{t > \gamma_{n,k}: M(t) = 1/k\}$ and $\gamma_{n+1,k} = \inf\{t > \beta_{n,k}: M(t) = 0\}$. Then, for $i=1,\ldots, G_k$, let $E_{i,k} = [\gamma_{i,k}, \beta_{i,k}]$, the interval of time after the $(i-1)^{th}$ excursion to reach state $1/k$, needed for an excursion to reach state $1/k$ again. Then $E_{1,k}, \dots, E_{G_k,k}$ is a covering of $Z'$. By part (i) of Theorem \ref{CDI}, we have $$\mathbb{E}[|E_{i,k}|] = \frac{2}{c(1-\theta)k}, \quad \text{for all $i=1,\ldots,G_k$.}$$   By Jensen's Inequality, there is a positive constant $C_2$ such that
$$\mathbb{E} \bigg[ \sum_{i=1}^{G_k} |E_{i,k}|^{\alpha} \bigg] = \mathbb{E}[|E_{1,k}|^{\alpha}] \mathbb{E}[G_k] \leq (\mathbb{E}[|E_{1,k}|])^{\alpha} \mathbb{E}[G_k] \leq C_2 k^{\theta - \alpha},$$ which tends to zero as $k \rightarrow \infty$.  By Markov's Inequality, for all $\varepsilon > 0$, we have $$\lim_{k \rightarrow \infty} \mathbb{P} \bigg( \sum_{i=1}^{G_k} |E_{i,k}|^{\alpha} > \varepsilon \bigg) = 0,$$ which is sufficient to establish that $\dim(Z') \leq \alpha$ for all $\alpha\in(\theta,1)$. Thus $\dim(Z')\leq\theta$ and thus $\dim(Z) \leq \theta$ almost surely.

It remains to establish the lower bound on the Hausdorff dimension.  Recall that $L$ denotes the local time at zero of the process $M$, and $L^{-1}$ denotes the inverse local time.  Let $Z^* = Z \cap [0, L^{-1}_1]$.  {\color{black}By monotonicity} it suffices to show that $\dim(Z^*) \geq \theta$ almost surely.  We will use the mass distribution principle.  

Let $\mu$ be the probability measure on $Z^*$ whose distribution function is given by the inverse local time, so 
\begin{equation*}
\mu((s, t]) = L_t - L_s,\quad\text{for $0 \leq s < t \leq L^{-1}_1$.}
\end{equation*}
Choose $\alpha \in (0, \theta)$, and let $\eta = (\theta/\alpha) - 1 > 0$.  Recall from \eqref{qsigma} (with $C$ taken to be 1) that $\mathbb{Q}(\sigma_{1/n} < \zeta) = \Gamma(n + \theta)/\Gamma(n) \sim n^{\theta}$ as $n \rightarrow \infty$.  Therefore, there exists a positive constant $C_3$ such that for all $r \geq 1$, excursions {\color{black} of $M$ from $0$} which reach $1/r$ or higher occur at rate at least $C_3 r^{\theta}$ on the local time scale.  In particular, if $k$ is a positive integer and $1 \leq m \leq 2^k$, then while the accumulated local time at zero is between $(m-1)2^{-k}$ and $m2^{-k}$, the number of excursions which reach height $2^{-k/\alpha}$ or higher has a Poisson distribution with mean at least $2^{-k} \cdot C_3 (2^{k/\alpha})^{\theta} = C_3 2^{\eta k}$.  

Once the number of blocks gets down to $2^{k/\alpha}$, fragmentations are happening at the rate at most $\lambda2^{k/\alpha}$, and so the probability that the excursion lasts for a time $2^{-k/\alpha}$ or larger is at least $e^{-\lambda}$.  It follows that while the accumulated local time at zero is between $(m-1)2^{-k}$ and $m2^{-k}$, the expected number of excursions that last for a time at least $2^{-k/\alpha}$ is at least $C_4 2^{\eta k}$, where $C_4 = C_3 e^{-\lambda}$.  Since we will have $L^{-1}_{m 2^{-k}} - L^{-1}_{(m-1)2^{-k}} < 2^{-k/\alpha}$ only if there are no such excursions, it follows that $$\mathbb{P}\big( L^{-1}_{m 2^{-k}} - L^{-1}_{(m-1)2^{-k}} < 2^{-k/\alpha} \big) \leq e^{-C_4 2^{\eta k}}$$ and therefore $$\mathbb{P} \big( L^{-1}_{m 2^{-k}} - L^{-1}_{(m-1)2^{-k}} < 2^{-k/\alpha} \mbox{ for some }m \in \{1, \dots, 2^k\} \big) \leq 2^k e^{-C_4 2^{\eta k}}.$$  Since $\sum_{k=1}^{\infty} 2^k e^{-C_4 2^{\eta k}} < \infty$, by the Borel-Cantelli Lemma, there almost surely exists a random positive integer $K$ such that for $k \geq K$, we have $$L^{-1}_{m2^{-k}} - L^{-1}_{(m-1)2^{-k}} \geq 2^{-k/\alpha} \mbox{ for all }k \geq K \mbox{ and }m \in \{1, \dots, 2^k\}.$$  Now let $\delta = 2^{-K/\alpha}$.  Let $V$ be a closed subset of $Z^*$ with $|V| \leq \delta$.  Let $t = \inf\{s: s \in V\}$ and $\gamma = |V|$, so $V \subset [t, t + \gamma]$.  We have $$\mu(V) \leq \mu([t, t + \gamma]) = L_{t + \gamma} - L_t.$$  Let $k$ be the positive integer such that $2^{-(k+1)/\alpha} < \gamma \leq 2^{-k/\alpha}$.  Note that because $\gamma \leq \delta$, we have $k \geq K$.  Choose $m$ such that $(m-1)2^{-k} \leq L_t < m 2^{-k}$.  If $m < 2^k$, then $$L_{(m+1)2^{-k}}^{-1} - L_{m2^{-k}}^{-1} \geq 2^{-k/\alpha} \geq \gamma,$$ so $L_{t + \gamma} \leq (m+1)2^{-k}$.  It follows that $$\mu(V) \leq L_{t+\gamma} - L_t \leq (m+1)2^{-k} - (m-1)2^{-k} = 4 \cdot  2^{-(k+1)} \leq 4 \gamma^{\alpha}.$$  Likewise, if $m = 2^k$, then since $t+\gamma \in V \subset Z^*$, we have $L_{t + \gamma} \leq 1$ and therefore $$\mu(V) \leq L_{t+\gamma} - L_t \leq 1 - (m-1)2^{-k} \leq 2 \gamma^{\alpha}.$$
It now follows from the mass distribution principle that $\dim(Z^*) \geq \alpha$ and thus $\dim(Z^*) \geq \theta$.  The proof of Theorem \ref{Hausthm} is now complete.
\end{proof}
}

\section*{Acknowledgements}
{\color{black}We would like to thank Amaury Lambert who, following a presentation of an initial version of this work  at Imperial College London, suggested that an excursion theory could be developed. Also we thank Bat\i{} \c{S}eng\"ul and Cl\'ement Foucart for helpful discussions regarding the intuition behind the main result. We would also like to thank the referees for posing a question with regards the Hausdorff dimension, leading us to find an additional interesting result. 
Part of this work was carried out when JS visited Bath to give a doctoral course, for which we would like to acknowledge support from EPSRC CDT SAMBa grant EP/L015684/1.}

\bibliographystyle{imsart-nameyear}

\end{document}